\newtheorem{theorem}{Theorem}
\newtheorem{corollary}{Corollary}
\newtheorem{lemma}{Lemma}
\renewcommand{\hat}{\widehat}
\newcommand{\cc}{\mathbf{c}}
\newcommand{\Gn}{\mathbb{G}_n}
\newcommand{\Pn}{\mathbb{P}_n}
\newcommand{\En}{\mathbb{E}_n}
\newcommand{\Ep}{\mathrm{E}}
\def\RR{\mathbb{R}}
\def\cc{{\bar c}}
\def\ii{{\bullet}}
\begin{document}
\begin{frontmatter}

\title{Least squares after model selection in high-dimensional sparse models}
\runtitle{Least squares after model selection}

\begin{aug}
\author[1]{\fnms{Alexandre} \snm{Belloni}\corref{}\thanksref{1}\ead[label=e1]{abn5@duke.edu}} \and
\author[2]{\fnms{Victor} \snm{Chernozhukov}\thanksref{2}\ead[label=e2]{vchern@mit.edu}}
\runauthor{A. Belloni and V. Chernozhukov} 
\address[1]{100 Fuqua Drive, Durham, North Carolina 27708, USA.
\printead{e1}}
\address[2]{50 Memorial Drive, Cambridge, Massachusetts 02142, USA. \printead{e2}}
\end{aug}

\received{\smonth{4} \syear{2010}}
\revised{\smonth{6} \syear{2011}}

%
\begin{abstract}
In this article we study post-model selection estimators that apply
ordinary least squares (OLS) to the model selected by first-step
penalized estimators, typically {Lasso}. It is well known that {Lasso}
can estimate the nonparametric regression function at nearly the oracle
rate, and is thus hard to improve upon. We show that the {OLS}
post-{Lasso} estimator performs at least as well as {Lasso} in terms of
the rate of convergence, and has the advantage of a smaller bias.
Remarkably, this performance occurs even if the {Lasso}-based model
selection ``fails'' in the sense of missing some components of the
``true'' regression model. By the ``true'' model, we mean the best
$s$-dimensional approximation to the nonparametric regression function
chosen by the oracle. Furthermore, {OLS} post-{Lasso} estimator can
perform strictly better than {Lasso}, in the sense of a strictly faster
rate of convergence, if the {Lasso}-based model selection correctly
includes all components of the ``true'' model as a subset and also
achieves sufficient sparsity. In the extreme case, when {Lasso}
perfectly selects the ``true'' model, the {OLS} post-{Lasso} estimator
becomes the oracle estimator. An important ingredient in our analysis
is a new sparsity bound on the dimension of the model selected by
{Lasso}, which guarantees that this dimension is at most of the same
order as the dimension of the ``true'' model. Our rate results are
nonasymptotic and hold in both parametric and nonparametric models.
Moreover, our analysis is not limited to the {Lasso} estimator acting
as a selector in the first step, but also applies to any other
estimator, for example, various forms of thresholded {Lasso}, with good
rates and good sparsity properties. Our analysis covers both
traditional thresholding and a new practical, data-driven thresholding
scheme that induces additional sparsity subject to maintaining a
certain goodness of fit. The latter scheme has theoretical guarantees
similar to those of {Lasso} or {OLS} post-{Lasso}, but it dominates
those procedures as well as traditional thresholding in a wide variety
of experiments.
\end{abstract}

%
\begin{keyword}
\kwd{Lasso}
\kwd{OLS post-Lasso}
\kwd{post-model selection estimators}
\end{keyword}

\end{frontmatter}

\section{Introduction}\label{intr}
In this work, we study post-model selection estimators for
linear regression in high-dimensional sparse models ({hdsms}). In
such models, the overall number of regressors $p$ is very large,
possibly much larger than the sample size $n$.
However, there are $s=\mathrm{o}(n)$ regressors that capture most of the impact
of all covariates\vadjust{\goodbreak} on the response variable. {hdsms} \cite
{CandesTao2007,MY2007} have emerged to
deal with many new applications arising in biometrics, signal
processing, machine learning, econometrics, and other areas of data
analysis where high-dimensional data sets have become widely available.

Several authors have investigated estimation of {hdsms}, focusing
primarily on mean regression with the $\ell_1$-norm acting as a penalty
function \cite
{BickelRitovTsybakov2009,BuneaTsybakovWegkamp2006,BuneaTsybakovWegkamp2007,BuneaTsybakovWegkamp2007b,CandesTao2007,Koltchinskii2009,MY2007,vdGeer,Wainright2006,ZhangHuang2006}.
The results of \cite
{BickelRitovTsybakov2009,BuneaTsybakovWegkamp2006,BuneaTsybakovWegkamp2007,BuneaTsybakovWegkamp2007b,Koltchinskii2009,MY2007,Wainright2006,ZhangHuang2006}
demonstrate the fundamental result that $\ell_1$-penalized least
squares estimators achieve the rate $\sqrt{s/n} \sqrt{\log p}$, which
is very close to the oracle rate $\sqrt{s/n}$ achievable when the true
model is known.~\cite{Koltchinskii2009,vdGeer} demonstrated a similar
fundamental result on the excess forecasting error loss under both
quadratic and nonquadratic loss functions. Thus the estimator can be
consistent and can have excellent forecasting performance even under
very rapid, nearly exponential growth of the total number of regressors
$p$. In addition,~\cite{BC-SparseQR} investigated the
$\ell_1$-penalized quantile regression process and obtained similar
results. See \cite
{BickelRitovTsybakov2009,BuneaTsybakovWegkamp2006,BuneaTsybakovWegkamp2007,BuneaTsybakovWegkamp2007b,FanLv2006,Lounici2008,LouniciPontilTsybakovvandeGeer2009,RosenbaumTsybakov2008}
for many other interesting developments and a detailed review of the
existing literature.

In this article we derive theoretical properties of post-model
selection estimators that apply ordinary least squares ({OLS}) to the
model selected by first-step penalized estimators, typically {Lasso}.
It is well known that {Lasso} can estimate the mean regression function
at nearly the oracle rate, and thus is hard to improve on. We show that
{OLS} post-{Lasso} can perform at least as well as {Lasso} in terms of
the rate of convergence, and has the advantage of a smaller bias. This
nice performance occurs even if the {Lasso}-based model selection
``fails'' in the sense of missing some components of the ``true''
regression model. (By the ``true'' model, we mean the best
$s$-dimensional approximation to the regression function chosen by the
oracle.) The intuition for this result is that {Lasso}-based model
selection omits only those components with relatively small
coefficients. Furthermore, {OLS} post-{Lasso} can perform better than
{Lasso} in the sense of a strictly faster rate of convergence, if the
{Lasso}-based model correctly includes all components of the ``true''
model as a subset and is sufficiently sparse. Of course, in the extreme
case, when {Lasso} perfectly selects the ``true'' model, the {OLS}
post-{Lasso} estimator becomes the oracle estimator.

Importantly, our rate analysis is not limited to the {Lasso} estimator
in the first step, but applies to a wide variety of other first-step
estimators, including, for example, thresholded {Lasso}, the Dantzig
selector, and their various modifications. We provide generic rate
results that cover any first-step estimator for which a rate and a
sparsity bound are available. We also present a generic result from
using thresholded {Lasso} as the first-step estimator, where
thresholding can be performed by a traditional thresholding scheme
({t-}{Lasso}) or by a new fitness-thresholding scheme that we introduce
here ({fit}-{Lasso}). The new thresholding scheme induces additional
sparsity subject to maintaining a certain goodness of fit in the sample
and is completely data-driven. We show that {OLS} post-{fit} {Lasso}
estimator performs at least as well as the {Lasso} estimator, but can
be strictly better under good model selection properties. 

Finally, we conduct a series of computational experiments and find that
the results confirm our theoretical findings. Figure~\ref{Fig:Main}
provides a brief graphical summary of our theoretical results, showing
how the empirical risk of various estimators change with the signal
strength $C$ (coefficients of relevant covariates are set equal to
$C$). For very low signal levels, all estimators perform similarly.
When the signal strength is intermediate, {OLS} post-{Lasso} and {OLS}
post-{fit} {Lasso} significantly outperform {Lasso} and the {OLS}
post-{t} {Lasso} estimators. However, we find that the {OLS} post-{fit}
{Lasso} outperforms {OLS} post-{Lasso} whenever {Lasso} does not
produce very sparse solutions, which occurs if the signal strength
level is not low. For large levels of signal, {OLS} post-{fit} {Lasso}
and {OLS} post-{t} {Lasso} perform very well, improving on {Lasso} and
{OLS} post-{Lasso}. Thus, the main message here is that {OLS}
post-{Lasso} and {OLS} post-{fit} {Lasso} perform at least as well as
{Lasso} and sometimes a lot better.
%
%
\begin{figure}

\includegraphics{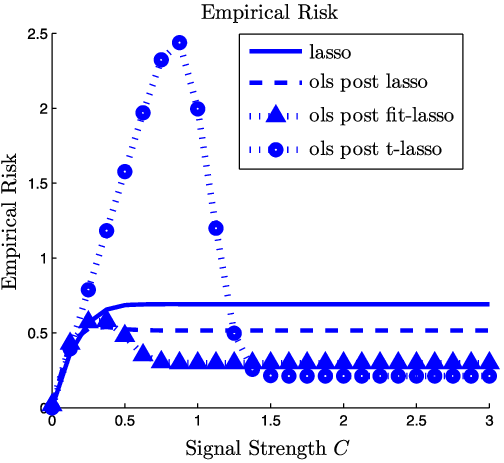}

\caption{This figure plots the performance of the
estimators listed in the text under the equicorrelated design for the
covariates $x_i \sim N(0,\Sigma)$, $\Sigma_{jk} = 1/2$ if $j\neq k$.
The number of regressors is $p=500$, and the sample size is $n=100$
with 1000 simulations for each level of signal strength $C$. In each
simulation, there are $5$ relevant covariates whose coefficients are
set equal to the signal strength $C$, and the variance of the noise is
set to $1$.}\label{Fig:Main}
\end{figure}

To the best of our knowledge, this article is the first to establish
the aforementioned rate results on {OLS} post-{Lasso} and the proposed
{OLS} post-fitness-thresholded {Lasso} in the mean regression problem.
Our analysis builds on the ideas of~\cite{BC-SparseQR}, who established
the properties of postpenalized procedures for the related, but
different problem of median regression. Our analysis also builds on the
fundamental results of~\cite{BickelRitovTsybakov2009} and the other
works cited above that established the properties of the first-step
{Lasso}-type estimators. An important ingredient in our analysis is a
new sparsity bound on the dimension of the model selected by {Lasso},
which guarantees that this dimension is at most of the same order as
the dimension of the ``true'' model. This result builds on some
inequalities for sparse eigenvalues and reasoning previously given by
\cite{BC-SparseQR} in the context of median regression. Our sparsity
bounds for {Lasso} improve on the analogous bounds of \cite
{BickelRitovTsybakov2009} and are comparable to the bounds of \cite
{ZhangHuang2006} obtained under a larger penalty level. We also rely on
the maximal inequalities of~\cite{ZhangHuang2006} to provide primitive
conditions for the sharp sparsity bounds to hold.

The article is organized as follows. Section~\ref{sec2} reviews the model and
discusses the estimators. Section~\ref{sec3} revisits some benchmark results of
\cite{BickelRitovTsybakov2009} for {Lasso}, allowing for a data-driven
choice of penalty level, develops an extension of model selection
results of~\cite{Lounici2008} to the nonparametric case, and derives a
new sparsity bound for {Lasso}. Section~\ref{Sec:PostModel} presents a generic rate
result on {OLS} post-model selection estimators. Section~\ref{sec5} applies the
generic results to the {OLS} post-{Lasso} and the {OLS} post-thresholded
{Lasso} estimators. The \hyperref[app]{Appendix} contains main proofs, and the
supplemental article~\cite{BC-PostL1-SA} contains auxiliary proofs, as
well as the results of our computational experiments.
\subsection*{Notation}
When making asymptotic statements, we assume that $n
\to\infty$ and $p=p_n \to\infty$, and also allow for $s=s_n \to
\infty$. In what follows, all parameter values  are indexed by the
sample size $n$, but we omit the index whenever this omission will not
cause confusion. We use the notation $(a)_+ = \max\{a,0\}$, $a \vee b =
\max\{ a, b\}$, and $a \wedge b = \min\{ a , b \}$. The $\ell_2$-norm
is denoted by $\|\cdot\|$, the $\ell_1$-norm is denoted by $\|\cdot\|
_1$, the $\ell_\infty$-norm is denoted by $\|\cdot\|_\infty$,
and the $\ell_0$-norm $\|\cdot\|_0$ denotes the number of nonzero
components of a vector. Given a vector $\delta\in\RR^p$ and
a set of indices $T \subset\{1,\ldots,p\}$,
%
we denote by $\delta_T$ the vector in $\mathbb R^p$ in which $\delta_{Tj} = \delta_j$
if $j\in T$ and $\delta_{Tj}= 0$
if $j \notin T$. The cardinality of $T$ is denoted by $|T|$.
Given a covariate vector $x_i \in\RR^p$, we let $x_i[T]$ denote
the vector $\{x_{ij}, j \in T\}$. The symbol $\Ep[\cdot]$ denotes the
expectation. We also use standard empirical process notation
\[
\En[f(z_\ii)] := \sum_{i=1}^n f(z_i)/n \quad\mbox{and}\quad \Gn(f(z_\ii))
:= \sum_{i=1}^n \bigl( f(z_i) - \Ep[f(z_i)] \bigr)/\sqrt{n}.
\]
We denote the $L^2(\Pn)$ norm by $\|f\|_{\Pn,2}=(\En[f_\ii^2])^{1/2}$.
Given covariate values $x_1,\ldots,x_n$, we define the prediction norm
of a vector $\delta\in\RR^p$ as $\|\delta\|_{2,n} = \{\En[(x_\ii
'\delta)^2]\}^{1/2}=(\delta'\En[x_\ii x_\ii']\delta)^{1/2}$.
We use the notation $a \lesssim b$ to denote $a \leq C b$ for some
constant $C>0$ that does not depend on $n$ (and thus does not depend on
quantities indexed by $n$ like $p$ or~$s$), and $a\lesssim_P b$ to
denote $a=O_P(b)$. For an event $A$, we say that $A$ wp $\to$ 1 when
$A$ occurs with probability approaching 1 as $n$ increases. In
addition, we write $\bar c = (c+1)/(c-1)$ for a chosen constant $c>1$.
%
\section{Setting, estimators, and conditions}\label{sec2}
\subsection{Setting}\label{conm}
\begin{condition*}[M]We have data $\{(y_i,z_i),
i=1,\ldots,n\}$ such that for each $n$,
%
%
\begin{equation}\label{equation model}
y_i = f(z_i) + \epsilon_i, \qquad\epsilon_i \sim N(0, \sigma^2), i=1,\ldots,n,
\end{equation}
where $y_i$ are the outcomes, $z_i$ are vectors of fixed regressors,
and $\epsilon_i$ are i.i.d. errors. Let $P(z_i)$
be a given $p$-dimensional dictionary of technical regressors with
respect $z_i$, that is, a p-vector of transformation of $z_i$, with
components
\[
x_i:=P(z_i)
\]
of the dictionary normalized so that
\[
\En[x_{\ii j}^2] = 1\qquad\mbox{for } j=1,\ldots,p.
\]
In making
asymptotic statements, we assume that $n \to\infty$ and $p=p_n \to
\infty$,
and that all parameters of the model are implicitly indexed by $n$.
\end{condition*}

We would like to estimate the nonparametric regression function $f$ at
the design points, namely the values $f_i = f(z_i)$ for $i=1,\ldots,n$.
To set up the estimation and define a performance benchmark, we
consider the following oracle risk minimization
program:
%
%
\begin{equation}\label{oracle}
\min_{0 \leq k \leq p \wedge n} c^2_k + \sigma^2 \frac{k}{n},
\end{equation}
where
%
%
\begin{equation}\label{oracle: ck}
c^2_k := \min_{ \| \beta\|_0 \leq k} \En[(f_\ii- x_\ii'\beta)^2].
\end{equation}
Note that $c^2_k + \sigma^2 k/n$ is an upper bound on the risk of the
best $k$-sparse
least squares estimator, that is, the best estimator among all least
squares estimators that use $k$ out of $p$ components
of $x_i$ to estimate $f_i$ for $i=1,\ldots,n$. The oracle program
(\ref{oracle})
chooses the optimal value of $k$. Let $s$ be the smallest integer among
these optimal values, and let
%
%
\begin{equation}\label{define beta}
\beta_0 \in\arg\min_{\|\beta\|_0\leq s} \En[(f_\ii- x_\ii'\beta)^2].
\end{equation}
We call $\beta_0$ the oracle target value, $T:= \operatorname{support}(\beta
_0)$ the oracle model, $s:= |T| = \| \beta_0\|_0$ the dimension of the
oracle model, and $x_i'\beta_0$
the oracle approximation to $f_i$. The latter is our
intermediary target, which is equal to the ultimate target $f_i$ up to
the approximation error
\[
r_i := f_i - x_i'\beta_0.
\]
If we knew
$T$, then we could simply use $x_i[T]$
as regressors and estimate $f_i$, for $i=1,\ldots,n$, using the least
squares estimator, achieving the risk of at most
\[
c^2_s + \sigma^2 s/n,
\]
which we call the oracle risk. Because $T$ is not known, we estimate
$T$ using Lasso-type methods and analyze the properties of post-model selection
least squares estimators, accounting for possible model selection mistakes.
\begin{remark}[(The oracle program)]
Note that if argmin is not unique in
the problem (\ref{define beta}), then it suffices to select one of the
values in the set of argmins.
Supplemental article~\cite{BC-PostL1-SA} provides a more detailed
discussion of the oracle problem.
The idea of using oracle problems such as (\ref{oracle}) for
benchmarking the performance follows its previous uses in the
literature (see, e.g.,~\cite{BickelRitovTsybakov2009}, Theorem~6.1, where
an analogous
problem appears in upper bounds on performance of {Lasso}).
\end{remark}
\begin{remark}[(A leading special case)]
When contrasting the performance of Lasso and OLS post-Lasso estimators
in Remarks~\ref{comment: main}  and~\ref{comment: main-goof}  given later, we mention a balanced case where
%
%
\begin{equation}\label{Assump:cs}
c_s^2 \lesssim\sigma^2 s/n,
\end{equation}
which says that the oracle program (\ref{oracle}) is able to balance
the norm of the bias squared to be not much larger than the variance
term $\sigma^2 s/n$. This corresponds to the case where the
approximation error bias does not dominate the estimation error of the
oracle least squares estimator, so that the oracle rate of convergence
simplifies to $\sqrt{s/n}$,
as mentioned in the \hyperref[intr]{Introduction}.
\end{remark}
%
\subsection{Model selectors based on {Lasso}}
Given the large number of regressors $p>n$, some regularization or
covariate selection is needed to obtain consistency.
The {Lasso} estimator~\cite{T1996}, defined as follows, achieves both
tasks by using the $\ell_1$ penalization:
%
%
\begin{equation}\label{Def:{Lasso}main}
\widehat\beta\in\arg\min_{\beta\in\Bbb{R}^p} \widehat Q (\beta
) + \frac{\lambda}{n} \| \beta\|_{1},\qquad\mbox{where } \widehat Q
(\beta) = \En[(y_\ii- x_\ii'\beta)^2],
\end{equation}
and $\lambda$ is the penalty level, the choice of which is described
later. If the solution is not unique, then we pick any solution with
minimum support. The {Lasso} is often used as an estimator, and most
often only as a model selection device, with the model selected by
{Lasso} given by
\[
\widehat T := \operatorname{support}(\widehat\beta).
\]
Moreover, we let $\widehat m := |\widehat T\setminus T|$ denote the
number of components outside $T$ selected by {Lasso} and let $\widehat
f_i = x_i'\widehat\beta, i=1,\ldots,n$, denote the {Lasso} estimate of
$f_i, i=1,\ldots,n$.

Often, additional thresholding is applied to remove regressors with
small estimated coefficients, defining the so-called
``thresholded'' {Lasso} estimator,
%
%
\begin{equation}\label{Def:{t-}{Lasso}}
\hat\beta(t) = ( \hat\beta_j 1\{|\widehat\beta_j|>t\}, j
=1,\ldots,p),
\end{equation}
where $t \geq0$ is the thresholding level. The corresponding selected
model is then
\[
\widehat T(t) := \operatorname{support}(\hat\beta(t)).
\]
Note that, when setting $t=0$, we have $\widehat T(t) = \widehat T$, so
{Lasso} is a special case of thresholded {Lasso}.
%
\subsection{Post-model selection estimators} Given the foregoing, all
of our post-model selection estimators or {OLS} post-{Lasso} estimators
will take the form
%
%
\begin{equation}
\widetilde\beta^t = \arg\min_{\beta\in\RR^p} \widehat Q(\beta)\dvtx
\beta_j = 0 \qquad\mbox{for each } j \in\widehat T^{c}(t).
\end{equation}
That is, given that the model selected a threshold {Lasso} $\widehat T
(t)$, including the {Lasso}'s
model $\widehat T (0)$ as a special case, the post-model selection
estimator applies OLS to the selected model.

Along with the case of $t=0$, we also consider the following choices
for the threshold level:
%
%
\begin{equation}\label{Eq:TL}
\everymath{\displaystyle}
\begin{array}{l@{\quad}l}
\mbox{traditional threshold (t):} & t > \zeta=\max
_{1\leq j\leq p}| \widehat\beta_{j} - \beta_{0j}|, \\
\mbox{fitness-based threshold ({fit}):} & t = t_{\gamma
}:= \max_{t\geq0}\{ t \dvtx \widehat Q(\widetilde\beta^t) -
\widehat Q(\widehat\beta) \leq\gamma\},
\end{array}
\end{equation}
where $\gamma\leq0 $, and $|\gamma|$ is the gain of the in-sample fit
allowed relative to {Lasso}.

As discussed in Section~\ref{Sec:ModelSelection}, the standard
thresholding method is particularly appealing in models in which oracle
coefficients $\beta_0$ are well separated from 0. However, this scheme
may perform poorly in models with oracle coefficients not well
separated from 0 and in nonparametric models. Indeed, even in
parametric models with many small but nonzero true coefficients,
thresholding the estimates too aggressively may result in large
goodness-of-fit losses and, consequently, slow rates of convergence and
even inconsistency for the second-step estimators. This issue directly
motivates our new goodness-of-fit based thresholding method, which
sets as many small coefficient estimates as possible to 0, subject to
maintaining a certain goodness-of-fit level.\looseness=1

Depending on how we select the threshold, we consider three types
of post-model selection estimators:
%
%
\begin{equation}\label{post {Lasso} estimators}
\begin{array}{l@{\quad}l@{\quad}l}
\mbox{{OLS} post-{Lasso}:} & \widetilde\beta^0 & (t =
0), \\[2pt]
\mbox{{OLS} post-{t} {Lasso}:} & \widetilde\beta^t & (t
> \zeta), \\[2pt]
\mbox{{OLS} post-{fit} {Lasso}:} & \widetilde\beta^{t_{\gamma
}} & (t = t_{\gamma}).
\end{array}
\end{equation}
The first estimator is defined by {OLS} applied to the model selected
by {Lasso}, also called Gauss-{Lasso}; the second, by {OLS} applied to
the model selected by the thresholded {Lasso}l and the third, by {OLS}
applied to the model selected
by fitness-thresholded {Lasso}.\looseness=1

The main purpose of this article is to derive the properties of the
post-model selection estimators (\ref{post {Lasso} estimators}).
If model selection works perfectly, which is possible only under rather
special circumstances, then the post-model selection estimators are the
oracle estimators, whose properties are well known. However, of much
more general interest is the case when model selection does not work
perfectly, as occurs for many designs of interest in applications.
%
\subsection{Choice and computation of penalty level for {Lasso}} The
key quantity in the analysis is the gradient of $\widehat Q$ at the
true value,
\[
S = 2 \En[ x_\ii\epsilon_\ii].
\]
This gradient is the effective ``noise'' in the problem that should be
dominated by the regularization. However, we would like to make the
bias as small as possible. This reasoning suggests choosing the
smallest penalty level $\lambda$ possible to dominate
the noise, namely
%
%
\begin{equation}\label{choice of lambda probability}
\lambda\geq c n \|S\|_{\infty}\qquad \mbox{with probability at least } 1-
\alpha,
\end{equation}
where probability $1-\alpha$ needs to be close to 1 and $c>1$.
Therefore, we propose setting
%
%
\begin{equation}\label{choice of lambda}
\lambda= c' \hat\sigma\Lambda(1-\alpha|X)\qquad \mbox{for some fixed }
c' > c > 1,
\end{equation}
where $\Lambda(1-\alpha|X)$ is the $(1-\alpha)$ quantile of $n\|
S/\sigma\|_{\infty}$, and
$\widehat\sigma$ is a possibly data-driven estimate of $\sigma$. Note
that the quantity $\Lambda(1-\alpha|X)$ is independent of $\sigma$ and
can be easily approximated by simulation.
We refer to this choice of $\lambda$ as the data-driven choice,
reflecting the dependence of the choice on the design matrix
$X=[x_1,\ldots,x_n]'$
and a possibly data-driven $\widehat\sigma$.
Note that the proposed (\ref{choice of lambda}) is sharper than
$c'\widehat\sigma2\sqrt{2n \log(p/\alpha)}$ typically used in the
literature.
We impose the following conditions on $\hat\sigma$.
\begin{condition*}[V] \label{conv} The estimated $\hat\sigma$ obeys
\[
\ell\leq\hat\sigma/\sigma\leq u \qquad\mbox{with probability at least }
1- \tau,
\]
where $0 <\ell\leq1$ and $1 \leq u$ and $0\leq\tau< 1$ are
constants possibly dependent on $n$.
\end{condition*}

We can construct a $\widehat\sigma$ that satisfies this condition
under mild assumptions, as follows. First,
set $\widehat\sigma= \hat\sigma_0$, where $\hat\sigma_0$ is an
upper bound on $\sigma$ that is possibly data-driven, for example, the
sample standard deviation of $y_i$.
Second, compute the {Lasso} estimator based on this estimate and set
$\widehat\sigma^2 = \widehat Q(\widehat\beta)$. We demonstrate that
$\hat\sigma$
constructed in this way satisfies Condition \hyperref[conv]{V} and characterize
quantities $u$ and $\ell$ and $\tau$ in the supplemental article
\cite
{BC-PostL1-SA}. We can iterate
on the last step a bounded number of times. We also can use {OLS}
post-{Lasso} for this purpose.
%
\subsection{Choices and computation of thresholding levels}
Our analysis covers a wide range of possible threshold levels. Here,
however, we propose some basic options that give both good
finite-sample and theoretical results. In the traditional
thresholding method, we can set
%
%
\begin{equation}
\label{eq: traditional threshold}
t = \tilde c \lambda/n,
\end{equation}
for some $\tilde c \geq1$. This choice is theoretically motivated by
Section~\ref{Sec:ModelSelection}, which presents the perfect model selection results, where
under some conditions, $\zeta\leq\tilde c \lambda/n$. This choice
also leads to near-oracle performance of the resulting post-model
selection estimator.
Regarding the choice of $\tilde c$, we note that setting $\tilde c=1$
and achieving $\zeta\leq\lambda/n$ is possible based on the results
of Section~\ref{Sec:ModelSelection}
if the empirical Gram matrix is orthogonal and approximation error
$c_s$ vanishes. Thus,
$\tilde c =1$ is the least aggressive traditional thresholding that can
be performed under conditions of Section~\ref{Sec:ModelSelection}. (Also note that $\tilde
c=1$ has performed better than $\tilde c >1$ in our computational experiments.)

Our fitness-based threshold $t_\gamma$ requires specification of the
parameter $\gamma$. The simplest choice delivering
near-oracle performance is $\gamma=0;$ this choice leads to the
sparsest post-model selection estimator
that has the same in-sample fit as {Lasso}. However, we prefer to set
%
%
\begin{equation}\label{DataDrivenGamma}\gamma= \frac{\widehat
Q(\widetilde\beta^{0}) - \widehat Q(\widehat\beta)}{2} < 0,
\end{equation}
where $\widetilde\beta^{0}$ is the {OLS} post-{Lasso}
estimator. The resulting estimator is sparser and produces a better
in-sample fit than {Lasso}.
This choice also results in near-oracle performance and leads to the
best performance in computational experiments.
Note also that for any $\gamma$, we can compute $t_\gamma$ by a binary
search over $t \in\mbox{sort}\{| \hat\beta_j|, j \in\widehat T\}$,
where $\mbox{sort}$ is the sorting operator. This is the case because
the final estimator depends only on the selected support, not on the
specific value of $t$ used. Therefore, because there are at most
$|\widehat T|$ different values of $t$ to be tested, using a binary search,
we can compute $t_\gamma$ exactly by running at most $\lceil\log_2
|\widehat T| \rceil$ OLS problems.
%
%
\subsection{Conditions on the design}
For the analysis of {Lasso}, we use the following restricted eigenvalue
condition on the empirical Gram matrix:
\begin{condition*}[($\boldsymbol{\mathit{RE(\cc)}}$)] \label{conrec} For a given $\cc\geq0$,
\[
\kappa(\cc) : = \min_{\|\delta_{T^c}\|_{1} \leq\cc\| \delta
_{T}\|_{1}, \delta\neq0
} \frac{\sqrt{s}\|\delta\|_{2,n}}{\|\delta_T\|_{1} } > 0.
\]
\end{condition*}

This condition is a variant of the restricted eigenvalue condition
introduced by~\cite{BickelRitovTsybakov2009}, which is
known to be quite general and plausible (see \cite
{BickelRitovTsybakov2009} for related conditions).

For the analysis of post-model selection estimators, we use the
following restricted sparse eigenvalue condition on the empirical Gram matrix:
\begin{condition*}[($\boldsymbol{\mathit{RSE}(m)}$)] For a given $m<n$,
\label{conrsem}
\[
\widetilde\kappa(m)^2 : = \min_{\|\delta_{T^c}\|_{0} \leq m, \delta
\neq0
} \frac{ \|\delta\|_{2,n}^2}{\|\delta\|^2}>0, \qquad\phi(m) : = \max_{\|
\delta_{T^c}\|_{0} \leq m, \delta\neq0
} \frac{ \|\delta\|^2_{2,n}}{\|\delta\|^2}.
\]
\end{condition*}

Condition \hyperref[conrsem]{$\mathit{RSE}(m)$} depends on $T$ and can be viewed as an extension of
the restricted isometry
condition~\cite{CandesTao2007}. Here $m$ denotes the restriction on the
number of nonzero components outside the support $T$. The standard
concept of (unrestricted) $m$-sparse eigenvalues corresponds to the
restricted sparse eigenvalues when $T=\varnothing$ (see, e.g., \cite
{BickelRitovTsybakov2009}). It is convenient to define the following
condition number associated with the empirical Gram
matrix:\looseness=-1
%
%
\begin{equation}\label{Def:mmu}
{\mu(m)} = \frac{\sqrt{\phi(m)}}{\widetilde\kappa(m)}.
\end{equation}\looseness=0
%

The following lemma demonstrates the plausibility of the foregoing
conditions for the case where the values $x_i$, $i=1,\ldots,n$, have
been generated as a realization of the random sample; there are other
primitive conditions as well. In this case, the empirical restricted
sparse eigenvalues are bounded away from 0 and from above, so that (\ref
{Def:mmu}) is bounded from above with high probability. The lemma assumes
as a primitive condition that the sparse eigenvalues of the population
Gram matrix bounded away from zero and from above. The lemma allows for many standard bounded
dictionaries that arise in the nonparametric estimation, for example,
regression splines, orthogonal polynomials, and trigonometric series
(see~\cite{Efromovich1999,vanGeer2000,Wasserman2005,TsybakovBook}).
Similar results are known to hold for standard Gaussian regressors as
well~\cite{ZhangHuang2006}.
\begin{lemma}[(Plausibility of $\boldsymbol{\mathit{RE}}$ and $\boldsymbol{\mathit{RSE}}$)]\label{Lemma:Plausible}
Suppose that $\tilde x_i$, $i = 1,\ldots,n$, are i.i.d. mean-zero vectors,
such that the population Gram matrix $\Ep[\tilde x \tilde x']$ has all
of the diagonal elements equal to~$1$, and
\[
0 < \kappa^2 \leq\min_{1\leq\|\delta\|_0 \leq s\log n} \frac
{\delta
'\Ep[\tilde x \tilde x']\delta}{\|\delta\|} \leq
\max_{1\leq\|\delta\|_0 \leq s\log n} \frac{\delta'\Ep[\tilde x
\tilde
x']\delta}{\|\delta\|} \leq\varphi< \infty.
\]
Define $x_{i}$ as a normalized form of $\tilde x_i$, namely $x_{ij}=
\tilde x_{ij}/(\En[\tilde x_{\ii j}^2])^{1/2}$. Suppose that  \[\max
_{1\leq i\leq n}\|\tilde x_{i}\|_\infty\leq K_n\qquad \mbox{a.s.}\quad  \mbox{and} \quad K^2_ns\log
^2 (n) \log^2 (s\log n) \log(p\vee n) = \mathrm{o}(n \kappa^4/\varphi).\] Then,
for any $m+s\leq s\log n$, the restricted sparse eigenvalues of the
empirical Gram matrix obey the following bounds:
\[
\phi(m) \leq4\varphi, \qquad\widetilde\kappa(m)^2 \geq\kappa^2/4\quad
\mbox{and}\quad {\mu(m)} \leq4\sqrt{\varphi}/\kappa,
\]
with probability approaching 1 as $n \to\infty$.
\end{lemma}
%
\section{\mbox{Results on {Lasso} as an estimator and model~selector}}\label{sec3}
The properties of the post-model selection estimators depend crucially
on both the estimation and model selection properties of {Lasso}. In
this section we
develop the estimation properties of {Lasso} under the data-dependent
penalty level, extending the results of~\cite{BickelRitovTsybakov2009},
and also develop the model selection properties of {Lasso} for
nonparametric models, generalizing the results of~\cite{Lounici2008} to
the nonparametric case.\vadjust{\goodbreak}
%
\subsection{Estimation properties of {Lasso}}
The following theorem describes the main estimation properties of
{Lasso} under the data-driven choice of the penalty level.
\begin{theorem}[(Performance bounds for {Lasso} under data-driven
penalty)]\label{Thm:Nonparametric}
Suppose that Conditions \textup{\hyperref[conm]{M}} and
\hyperref[conrec]{$\mathit{RE}(\cc)$} hold for $\cc=(c+1)/(c-1)$. If $\lambda\geq c n\| S\|
_{\infty
}$, then
\[
\|\widehat\beta- \beta_0\|_{2,n} \leq
\biggl(1 + \frac{1}{c}\biggr) \frac
{\lambda\sqrt{s}}{n \kappa(\cc)} + 2 c_s.
\]
Moreover, suppose that Condition \textup{\hyperref[conv]{V}} holds. Under the data-driven choice
(\ref{choice of lambda}), for $c'\geq c/\ell$, we have $\lambda\geq c
n\| S\|_{\infty} $ with probability at least $1 -\alpha-\tau$, so that
with at least the same probability,
\[
\|\widehat\beta- \beta_0\|_{2,n} \leq(c' + c'/c) \frac{\sqrt{s}}{n
\kappa(\cc)}\sigma u \Lambda(1-\alpha|X) + 2 c_s, \qquad\mbox{where }
\Lambda(1-\alpha|X) \leq\sqrt{2n\log(p/\alpha)}.
\]
If in addition \hyperref[conrec]{$\mathit{RE}(2\cc)$} holds, then
\[
\|\widehat\beta- \beta_0\|_{1} \leq\biggl(\frac
{(1+2\cc)\sqrt{s}}{\kappa(2\cc)} \|\widehat\beta- \beta_0\|
_{2,n} \biggr) \vee\biggl(\biggl(1 + \frac{1}{2\cc}\biggr)\frac{2c}{c-1}\frac
{n}{\lambda} c_s^2 \biggr).
\]
\end{theorem}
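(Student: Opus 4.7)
The plan is to adapt the Bickel–Ritov–Tsybakov oracle inequality argument to the nonparametric model $y_i = x_i'\beta_0 + r_i + \epsilon_i$ with $\En[r_\ii^2]=c_s^2$, and then layer the data-driven penalty on top. Set $\delta = \widehat\beta - \beta_0$. The basic inequality is the optimality relation $\widehat Q(\widehat\beta) - \widehat Q(\beta_0) \leq \tfrac{\lambda}{n}(\|\beta_0\|_1 - \|\widehat\beta\|_1) \leq \tfrac{\lambda}{n}(\|\delta_T\|_1 - \|\delta_{T^c}\|_1)$. Expanding the left-hand side via $y_i - x_i'\beta_0 = \epsilon_i + r_i$ gives $\|\delta\|_{2,n}^2 - S'\delta - 2\En[r_\ii x_\ii'\delta]$. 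Bounding $|S'\delta| \leq \|S\|_\infty\|\delta\|_1 \leq \tfrac{\lambda}{cn}\|\delta\|_1$ (from $\lambda \geq c n \|S\|_\infty$) and $|\En[r_\ii x_\ii'\delta]| \leq c_s \|\delta\|_{2,n}$ (Cauchy–Schwarz) yields the key inequality
$$
\|\delta\|_{2,n}^2 \;\leq\; 2 c_s \|\delta\|_{2,n} + \tfrac{\lambda}{n}\bigl((1+1/c)\|\delta_T\|_1 - (1-1/c)\|\delta_{T^c}\|_1\bigr). \qquad (\star)
$$

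For the prediction-norm bound, split on the sign of the parenthesized piece in $(\star)$. If it is nonnegative, then $\|\delta_{T^c}\|_1 \leq \cc \|\delta_T\|_1$, so RE($\cc$) gives $\|\delta_T\|_1 \leq \sqrt{s}\|\delta\|_{2,n}/\kappa(\cc)$; substituting back and dividing by $\|\delta\|_{2,n}$ yields the stated bound. Otherwise $(\star)$ collapses to $\|\delta\|_{2,n}^2 < 2 c_s \|\delta\|_{2,n}$, whence $\|\delta\|_{2,n} < 2 c_s$, and the bound still holds.

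The data-driven statement comes from two high-probability events. By definition of the $(1-\alpha)$-quantile, $\Pr\{n\|S\|_\infty \leq \sigma \Lambda(1-\alpha|X)\} \geq 1 - \alpha$; by Condition V, $\ell\sigma \leq \widehat\sigma \leq u\sigma$ with probability $\geq 1-\tau$. On the intersection, $\lambda = c'\widehat\sigma \Lambda(1-\alpha|X) \geq c'\ell\, n\|S\|_\infty \geq c\,n\|S\|_\infty$ (using $c' \geq c/\ell$) and simultaneously $\lambda \leq c'u\sigma\Lambda(1-\alpha|X)$; substituting the latter upper bound on $\lambda$ into the prediction-norm bound produces the explicit rate with probability $\geq 1 - \alpha - \tau$. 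The tail bound $\Lambda(1-\alpha|X) \leq \sqrt{2n\log(p/\alpha)}$ follows from a standard Gaussian maximal inequality applied to the $p$ normal random variables $nS_j/\sigma$.

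For the $\ell_1$ rate, dichotomize on whether $\|\delta_{T^c}\|_1 \leq 2\cc \|\delta_T\|_1$. In that case, RE($2\cc$) yields $\|\delta_T\|_1 \leq \sqrt{s}\|\delta\|_{2,n}/\kappa(2\cc)$, and $\|\delta\|_1 \leq (1+2\cc)\|\delta_T\|_1$ gives the first branch. Otherwise, applying $2 c_s \|\delta\|_{2,n} \leq \|\delta\|_{2,n}^2 + c_s^2$ inside $(\star)$ cancels $\|\delta\|_{2,n}^2$ and leaves $\tfrac{\lambda}{n}(1-1/c)\|\delta_{T^c}\|_1 \leq c_s^2 + \tfrac{\lambda}{n}(1+1/c)\|\delta_T\|_1$; the case hypothesis $\|\delta_T\|_1 < \|\delta_{T^c}\|_1/(2\cc)$ combined with the identity $(1+1/c)/(2\cc) = (1-1/c)/2$ absorbs half of the left-hand side, yielding $\|\delta_{T^c}\|_1 \leq \tfrac{2c}{c-1} \cdot n c_s^2/\lambda$; then $\|\delta\|_1 \leq (1 + 1/(2\cc))\|\delta_{T^c}\|_1$ delivers the second branch. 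The only nontrivial bookkeeping is verifying this constant identity so that the Case II bound collapses to the clean form stated; everything else is routine Cauchy–Schwarz, RE, and a union bound.
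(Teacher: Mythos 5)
Your proposal is correct and follows essentially the same route as the paper's proof: the same basic optimality inequality combined with $\lambda \geq c n\|S\|_\infty$ and Cauchy--Schwarz on the approximation error, the same case analysis giving either membership in the restricted set for RE($\cc$) (resp.\ RE($2\cc$)) or a bound driven by $c_s$, and the same union-bound argument under Condition V for the data-driven penalty. The only differences are cosmetic: you split on the sign of the bracket $(1+1/c)\|\delta_T\|_1-(1-1/c)\|\delta_{T^c}\|_1$ rather than of $\|\delta\|_{2,n}^2-2c_s\|\delta\|_{2,n}$, and you use $2c_s\|\delta\|_{2,n}\leq \|\delta\|_{2,n}^2+c_s^2$ where the paper maximizes $\|\delta\|_{2,n}(2c_s-\|\delta\|_{2,n})\leq c_s^2$, which are equivalent steps.
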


This theorem extends the result of~\cite{BickelRitovTsybakov2009} by
allowing for a data-driven
penalty level and deriving the rates in $\ell_1$-norm. These results
may be of independent
interest and are necessary for the subsequent results.
\begin{remark}\label{remark:NormsAndLowerBound} Furthermore, a
performance bound for the estimation of the regression function follows
from the relation
%
%
\begin{equation}\label{Def:NORM_ER}
\bigl| \|\widehat f - f\|_{\Pn,2} - \|\widehat\beta- \beta_0\|
_{2,n}\bigr | \leq c_s,
\end{equation}
where $\widehat f_i = x_i'\widehat\beta$ is the {Lasso} estimate of the
regression function $f$ evaluated at $z_i$.
It is interesting to know some lower bounds on the rate, which follow
from Karush--Kuhn--Tucker conditions
for {Lasso} (see equation (\ref{Eq:Lower{Lasso}}) in the \hyperref[app]{Appendix}):
\[
\|\widehat f - f\|_{\Pn,2} \geq\frac{(1-1/c)\lambda\sqrt{|\widehat
T|}}{2n \sqrt{ \phi(\hat m )}},
\]
where $\widehat m = |\widehat T \setminus T|$. We note that a similar
lower bound was first derived by \cite
{LouniciPontilTsybakovvandeGeer2010} with $\phi(p)$ instead of $\phi
(\hat m)$.
\end{remark}

The preceding theorem and discussion imply the following useful
asymptotic bound on the performance of the estimators.
\begin{corollary}[(Asymptotic bounds on performance of {Lasso})]\label
{Cor:LowerBound}
Under the conditions of Theorem~\ref{Thm:Nonparametric}, if
%
%
\begin{eqnarray}\label{SimpleSC}
\phi(\hat m) &\lesssim&1,\qquad \kappa(\cc) \gtrsim1, \qquad{\mu(\widehat
m)}\lesssim1, \qquad\log(1/\alpha) \lesssim\log p,\nonumber
\\[-8pt]
\\[-8pt]
\alpha&=&\mathrm{o}(1), \qquad u/\ell
\lesssim1 \quad \mbox{and} \quad\tau= \mathrm{o}(1)\nonumber
\end{eqnarray}
%
hold as $n$ grows, then we have
\[
\|\widehat f - f\|_{\Pn,2}
\lesssim_P \sigma\sqrt{\frac{s\log p}{n}} + c_s.
\]
Moreover, if $|\widehat T| \gtrsim_P s$ -- in particular, if
$T\subseteq
\widehat T$ with probability going to $1$ -- then we have
\[
\|\widehat f - f\|_{\Pn,2} \gtrsim_P \sigma\sqrt{\frac{s\log p}{n}}.
\]
\end{corollary}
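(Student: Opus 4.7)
The plan is to apply the second (data-driven penalty) inequality of Theorem \ref{Thm:Nonparametric}: on an event of probability at least $1-\alpha-\tau$,
$$\|\widehat\beta-\beta_0\|_{2,n}\le (c'+c'/c)\,\frac{\sqrt s}{n\,\kappa(\bar c)}\,\sigma\, u\,\Lambda(1-\alpha|X) + 2c_s.$$
I next substitute the explicit bound $\Lambda(1-\alpha|X)\le \sqrt{2n\log(p/\alpha)}$ from the same theorem. The simple side conditions (\ref{SimpleSC}) imply $\kappa(\bar c)\gtrsim 1$; from $u\ge 1\ge \ell$ combined with $u/\ell\lesssim 1$ one gets $u\asymp \ell\asymp 1$; and $\log(1/\alpha)\lesssim\log p$ yields $\log(p/\alpha)\lesssim \log p$. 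On the same event these give $\|\widehat\beta-\beta_0\|_{2,n}\lesssim \sigma\sqrt{s\log p/n}+c_s$, and since $\alpha+\tau\to 0$ this converts to an $\lesssim_P$ bound. The triangle-type inequality (\ref{Def:NORM_ER}) from Remark \ref{remark:NormsAndLowerBound} then transfers the same bound to $\|\widehat f-f\|_{\Pn,2}$, which is the upper half of the claim.

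\textbf{Lower bound.} For the matching lower bound I would invoke the KKT-based inequality also recorded in Remark \ref{remark:NormsAndLowerBound},
$$\|\widehat f-f\|_{\Pn,2}\ge \frac{(1-1/c)\,\lambda\,\sqrt{|\widehat T|}}{2n\sqrt{\phi(\widehat m)}}.$$
Under the hypothesis $|\widehat T|\gtrsim_P s$ and the side condition $\phi(\widehat m)\lesssim 1$, this reduces to $\|\widehat f-f\|_{\Pn,2}\gtrsim_P \lambda\sqrt{s}/n$, so it suffices to show $\lambda\gtrsim_P \sigma\sqrt{n\log p}$. Since $\lambda=c'\widehat\sigma\,\Lambda(1-\alpha|X)$, and Condition V combined with $\ell\asymp 1$ gives $\widehat\sigma\gtrsim \sigma$ with probability at least $1-\tau\to 1$, the remaining task is the matching lower bound $\Lambda(1-\alpha|X)\gtrsim \sqrt{n\log p}$.

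\textbf{Main obstacle.} The hard step is therefore this lower bound on the quantile $\Lambda(1-\alpha|X)$. Direct computation shows $n\|S\|_\infty/\sigma = 2\sqrt n\,\max_{j\le p}|Z_j|$, where $Z=(Z_1,\ldots,Z_p)$ is a zero-mean Gaussian vector with unit marginal variances and covariance $X'X/n$; the crude marginal bound then only yields $\Lambda\gtrsim \sqrt{n\log(1/\alpha)}$, which is insufficient when $\alpha$ is merely $o(1)$. To close the gap I would exploit the bounded sparse-eigenvalue structure implicit in the side conditions (via Lemma \ref{Lemma:Plausible}, for instance) to extract a sufficiently large subset of approximately orthogonal columns and then apply a Sudakov-type minoration to conclude $Q_{1-\alpha}(\max_j|Z_j|)\gtrsim \sqrt{\log p}$, matching the upper bound in Theorem \ref{Thm:Nonparametric} up to a constant. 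Combined with the reductions above, this completes the proof of the lower half of the claim.
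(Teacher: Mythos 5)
Your upper-bound argument coincides with the paper's own (the corollary is stated as an immediate consequence of ``the preceding theorem and discussion,'' with no separate proof): Theorem \ref{Thm:Nonparametric}'s data-driven bound, the quantile bound $\Lambda(1-\alpha|X)\le\sqrt{2n\log(p/\alpha)}$, the side conditions in (\ref{SimpleSC}) giving $\kappa(\bar c)\gtrsim 1$, $u\asymp\ell\asymp 1$ and $\log(p/\alpha)\lesssim\log p$, and finally (\ref{Def:NORM_ER}) to pass from $\|\widehat\beta-\beta_0\|_{2,n}$ to $\|\widehat f-f\|_{\Pn,2}$. That part is complete. Your lower-bound reduction is also the intended one: the KKT inequality of Remark \ref{remark:NormsAndLowerBound}, $|\widehat T|\gtrsim_P s$, $\phi(\widehat m)\lesssim 1$, and $\widehat\sigma\ge\ell\sigma\gtrsim\sigma$ reduce everything to showing $\Lambda(1-\alpha|X)\gtrsim\sqrt{n\log p}$.

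The closing step is where there is a genuine problem, and it is worth being precise about it. You are right that the crude marginal bound only gives $\sqrt{n\log(1/\alpha)}$, and right that the paper supplies no argument here (it implicitly treats the data-driven $\lambda$ as being of order $\sigma\sqrt{n\log p}$). But your proposed fix --- extracting a large set of approximately orthogonal columns from the side conditions --- does not follow from (\ref{SimpleSC}): the quantities $\kappa(\bar c)$, $\phi(\widehat m)$, $\mu(\widehat m)$ constrain only submatrices of the empirical Gram matrix indexed by $T$ together with the at most $\widehat m$ selected columns outside $T$, and the restricted set in RE($\bar c$) always has $\delta_T\neq 0$, so none of these conditions restricts the correlations among the remaining, unselected irrelevant columns. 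Those columns could sit in a handful of nearly collinear clusters without violating RE, RSE, $\mu(\widehat m)\lesssim 1$, or $|\widehat T|\gtrsim_P s$; then $n\|S/\sigma\|_\infty=2\sqrt n\max_{j\le p}|Z_j|$ behaves like a maximum over the number of clusters, and its $(1-\alpha)$-quantile is of order $\sqrt{n(\log k\vee\log(1/\alpha))}$ with $k\ll p$, which can be $o(\sqrt{n\log p})$ because $\log(1/\alpha)\lesssim\log p$ is only an upper bound. So the quantile lower bound is an additional design assumption, not a consequence of the corollary's stated hypotheses. Your Sudakov idea does work once you impose conditions that force all pairwise empirical correlations to be bounded away from one --- e.g.\ the sampling hypotheses of Lemma \ref{Lemma:Plausible}, under which $\mathrm{E}(Z_j-Z_k)^2$ is bounded below uniformly with high probability, and Sudakov minoration plus Gaussian concentration give $\Lambda(1-\alpha|X)\gtrsim\sqrt{n\log p}$ --- and this is in effect the unstated ``common design'' assumption under which the paper's lower bound should be read. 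In short: upper bound, same proof as the paper; lower bound, same reduction, but the final step as you wrote it is not justified by (\ref{SimpleSC}) alone and needs the stronger design structure made explicit --- a gap the paper itself also elides.
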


In Lemma~\ref{Lemma:Plausible} we established fairly general sufficient
conditions for the first three relations in (\ref{SimpleSC}) to hold
with high probability as $n$ grows, when the design
points $z_1,\ldots,z_n$ are generated as a random sample. The
remaining relations are mild conditions on the choice of $\alpha$ and
the estimation of $\sigma$ that are used in the definition of the
data-driven choice (\ref{choice of lambda}) of the penalty-level
$\lambda$.

It follows from the corollary that as long as $\kappa(\cc)$ is bounded
away from 0, {Lasso} with data-driven penalty estimates the regression
function at a near-oracle rate. The second part of the corollary
generalizes to the nonparametric case the lower bound obtained for
{Lasso} by~\cite{LouniciPontilTsybakovvandeGeer2010}. It shows that the
rate cannot be improved in general. We use
the asymptotic rates of convergence to compare the performance of
{Lasso} and the post-model selection estimators.
%
\subsection{Model selection properties of {Lasso}}\label{Sec:ModelSelection}
Our main results do not require that the first-step estimators like
{Lasso} perfectly select the ``true'' oracle model. In fact, we are
specifically interested in the most common cases, where these
estimators do not perfectly select the true model. For these cases, we
prove that post-model selection estimators such as {OLS} post-{Lasso}
achieve near-oracle rates like those of {Lasso}. However,
in some special cases where perfect model selection is possible, these
estimators can achieve the exact oracle rates, and thus can be even
better than {Lasso}. In this section we describe these very special
cases in which perfect model selection is possible.
\begin{theorem}[(Some conditions for perfect model selection in
nonparametric settings)]\label{Lemma:Crack} Suppose that Condition \textup{\hyperref[conm]{M}} holds.

(1) If the coefficients are well separated from 0, that is,
\[
\min_{j\in T} |\beta_{0j}| > \zeta+ t, \qquad\mbox{for some } t\geq
\zeta:= \max_{j=1,\ldots,p} |\widehat\beta_j - \beta_{0j}|,
\]
then the true model is a subset of the selected model,
$
T:=\operatorname{support}(\beta_0) \subseteq\widehat T :=
\operatorname{support}(\widehat\beta).$
Moreover, $T$ can be perfectly selected by applying level $t$
thresholding to $\widehat\beta$, that is, \mbox{$T = \widehat T(t)$}.

(2) In particular, if $\lambda\geq c n\|S\|_{\infty}$ and there is a
constant $U>5\cc$ such that the empirical Gram matrix satisfies $|\En
[x_{\ii j}x_{\ii k}]|\leq1/(Us)$ for all $1\leq j< k\leq p$, then
\[
\zeta\leq\frac{\lambda}{n} \cdot\frac{U + \cc}{U - 5\cc} +
\frac
{\sigma}{\sqrt{n}} \wedge c_s +
\frac{6\cc}{U-5\cc} \frac{c_s}{\sqrt{s}} + \frac{4\cc}{U}\frac
{n}{\lambda} \frac{c_s^2}{s}.
\]
\end{theorem}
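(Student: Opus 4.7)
\emph{Part (1)} follows directly from the triangle inequality applied coordinatewise. For $j\in T$, the hypothesis gives $|\widehat\beta_j|\ge|\beta_{0j}|-|\widehat\beta_j-\beta_{0j}|>(\zeta+t)-\zeta=t$, so $j\in\widehat T(t)\subseteq\widehat T$. For $j\notin T$, $|\widehat\beta_j|=|\widehat\beta_j-\beta_{0j}|\le\zeta\le t$, so $j\notin\widehat T(t)$. Hence $T\subseteq\widehat T$ and $T=\widehat T(t)$.

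\emph{Part (2)} combines three ingredients. \emph{First}, I apply the coordinatewise KKT conditions for lasso: substituting $y_i-x_i'\widehat\beta=\epsilon_i+r_i-x_i'\delta$ with $\delta=\widehat\beta-\beta_0$ and isolating the diagonal $\En[x_{\ii j}^2]=1$, I get
\begin{equation*}
\delta_j = R_j + \tfrac{1}{2}S_j - \tfrac{\lambda}{2n}\widehat t_j - \sum_{k\neq j}\En[x_{\ii j}x_{\ii k}]\delta_k,\qquad |\widehat t_j|\le 1,
\end{equation*}
where $R_j:=\En[x_{\ii j}r_\ii]$. Using $\|S\|_\infty\le\lambda/(cn)$ and the incoherence $|\En[x_{\ii j}x_{\ii k}]|\le 1/(Us)$ for $j\neq k$ yields
\begin{equation*}
\zeta=\|\delta\|_\infty\le\|R\|_\infty+\frac{\lambda(c+1)}{2cn}+\frac{\|\delta\|_1}{Us}.
\end{equation*}
\emph{Second}, I bound $\|R\|_\infty$ via the oracle property of $\beta_0$. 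For $j\in T$, the first-order conditions for unconstrained minimization over coefficients on $T$ give $R_j=0$. For $j\notin T$, the vector $\beta_0+R_je_j$ has sparsity $s+1$ and empirical risk $c_s^2-R_j^2$, so $c_{s+1}^2\le c_s^2-R_j^2$; combined with the optimality of $s$ in the oracle program~(\ref{oracle}), which implies $c_s^2-c_{s+1}^2\le\sigma^2/n$, this gives $|R_j|\le\sigma/\sqrt n$; and Cauchy--Schwarz gives $|R_j|\le c_s$. Hence $\|R\|_\infty\le(\sigma/\sqrt n)\wedge c_s$. \emph{Third}, I bound $\|\delta\|_1$ via the $\ell_1$ estimate of Theorem~\ref{Thm:Nonparametric}, together with its prediction-norm bound, using that under the mutual incoherence the restricted eigenvalues $\kappa(\cc),\kappa(2\cc)$ are controlled by $U$ via the Gershgorin-type estimate $\|\delta\|_{2,n}^2\ge\|\delta\|_2^2-\|\delta\|_1^2/(Us)$ restricted to the relevant cones. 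Plugging the resulting estimate into the displayed $\zeta$-inequality produces a self-referential bound of the form $\zeta\le A_0+B\zeta$ with $B=O(\cc/U)$; rearranging yields the leading coefficient $(U+\cc)/(U-5\cc)$ on $\lambda/n$, the $c_s/\sqrt s$ correction from the $2c_s$ summand in the prediction bound, and the $(n/\lambda)c_s^2/s$ correction from the second branch of the maximum in the $\ell_1$ bound.

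\emph{Main obstacle.} The delicate bookkeeping required to produce the exact coefficients $(U+\cc)/(U-5\cc)$, $6\cc/(U-5\cc)$, and $4\cc/U$. Since $\|\delta\|_1$ itself depends on $\lambda$, the argument is self-referential; and since the cone condition $\|\delta_{T^c}\|_1\le\cc\|\delta_T\|_1+(c_s\text{-terms})$, the split $\|\delta\|_1=\|\delta_T\|_1+\|\delta_{T^c}\|_1$, and the lower bounds on $\kappa(\cdot)$ all interact, the constants only emerge cleanly after careful tracking. I expect this constant-tracking to be the main source of difficulty, rather than any conceptually novel step.
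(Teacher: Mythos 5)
Your proposal matches the paper's own proof essentially step for step: the same coordinatewise KKT/incoherence bound $\zeta \leq (1+1/c)\frac{\lambda}{2n} + \frac{\sigma}{\sqrt n}\wedge c_s + \frac{\|\delta\|_1}{Us}$, the same oracle-optimality argument giving $|\En[x_{\ii j}r_\ii]| \leq \frac{\sigma}{\sqrt n}\wedge c_s$ (zero on $T$ by first-order conditions, and $c_s^2 - c_{s+1}^2 \leq \sigma^2/n$ off $T$), and the same use of Theorem \ref{Thm:Nonparametric}'s $\ell_1$ and prediction-norm bounds with $\kappa(\cc),\kappa(2\cc)$ lower-bounded through the incoherence condition. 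One small remark: since Theorem \ref{Thm:Nonparametric}'s $\ell_1$ bound does not involve $\zeta$, the final step is a direct substitution and constant check rather than the self-referential rearrangement $\zeta \leq A_0 + B\zeta$ you describe — the paper likewise concludes by plugging in and leaves that bookkeeping implicit.
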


These results substantively generalize the parametric results of \cite
{Lounici2008}
on model selection by thresholded {Lasso}. These results cover the more
general nonparametric case and may be of independent
interest. Also note that the stated conditions for perfect model
selection require a strong
assumption on the separation of coefficients of the oracle from 0,
along with near-perfect
orthogonality of the empirical Gram matrix. This is the sense in which
the perfect model selection
is a rather special, nongeneral phenomenon. Finally, we note that it
is possible to perform perfect selection of the oracle model by {Lasso}
without applying any additional thresholding under additional technical
conditions and higher penalty levels \cite
{Bunea2008,Wainright2006,ZhaoYu2006}. In the supplement, we state the
nonparametric extension of the parametric result due to~\cite{Wainright2006}.
%
\subsection{Sparsity properties of {Lasso}}
Here we derive new sharp sparsity bounds for {Lasso}, which may be of
independent interest.We begin with a preliminary sparsity bound for {Lasso}.
\begin{lemma}[(Empirical presparsity for {Lasso})]\label{Lemma:Sparsity{Lasso}}
Suppose that Conditions \textup{\hyperref[conm]{M}} and \hyperref[conrec]{$\mathit{RE}(\cc)$} hold and that $\lambda\geq
c n\|S\|_\infty$, and let $\hat m = |\widehat T \setminus T|$. For
$\cc
= (c+1)/(c-1)$, we have that
\[
\sqrt{\hat m} \leq\sqrt{s}\sqrt{\phi(\hat m)} 2\cc/\kappa(\cc) +
3(\cc+1) \sqrt{\phi(\hat m)} nc_s/\lambda.
\]
\end{lemma}

The foregoing lemma states that {Lasso} achieves the oracle sparsity up
to a factor of $\phi(\hat m)$. Under the conditions (\ref{Assump:cs})
and $\kappa(\cc)\gtrsim1$, the lemma immediately yields the simple
upper bound on the sparsity of the form
%
%
\begin{equation}\label{bad sparsity}
\widehat m \lesssim_P s \phi(n),
\end{equation}
as obtained for examples of~\cite{BickelRitovTsybakov2009} and \cite
{MY2007}. Unfortunately, this bound is sharp only when $\phi(n)$ is
bounded. When $ \phi(n)$ diverges -- for example, when $\phi(n)
\gtrsim
_P \sqrt{\log p}$ in the Gaussian design with $p \geq2n$ by lemma 6 of
\cite{BC-SparseQR-SA} -- the bound is not sharp. However, for this case
we can construct a sharp sparsity bound by combining the preceding
presparsity result with the following sublinearity property of the
restricted sparse eigenvalues.
\begin{lemma}[(Sublinearity of restricted sparse eigenvalues)]
\label{Lemma:SparseEigenvalueIMP}For any integer $k \geq0$ and
constant $\ell\geq1$, we have
$ \phi(\lceil\ell k \rceil) \leq\lceil\ell\rceil\phi(k).$
\end{lemma}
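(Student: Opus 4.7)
The plan is to reduce a vector that is $\lceil \ell k\rceil$-sparse outside $T$ to $\lceil \ell\rceil$ pieces, each of which is $k$-sparse outside $T$, and then bound the quadratic form of the whole by the sum of the quadratic forms of the pieces via Cauchy-Schwarz. The only subtlety is that the constraint in the definition of $\phi$ is only on the support outside $T$, while the $T$-components of $\delta$ are unrestricted; so $T$-components have to be spread across the pieces so each piece is feasible for $\phi(k)$.

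Concretely, write $N=\lceil\ell\rceil$ and take any $\delta\ne 0$ with $\|\delta_{T^c}\|_0\le \lceil\ell k\rceil$. Since $\lceil \ell k\rceil\le \lceil\ell\rceil k=Nk$ (because $\ell k\le Nk$ and $Nk$ is an integer), I can partition $\supp(\delta_{T^c})$ into disjoint sets $S_1,\dots,S_N$ with $|S_i|\le k$. Define
\[
\alpha^{(i)} := \tfrac{1}{N}\delta_T + \delta\cdot\mathbf{1}_{S_i},\qquad i=1,\dots,N,
\]
so that $\sum_{i=1}^N \alpha^{(i)}=\delta$ and $\|\alpha^{(i)}_{T^c}\|_0=|S_i|\le k$. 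By the definition of $\phi(k)$ applied to each $\alpha^{(i)}$,
\[
\|\alpha^{(i)}\|_{2,n}^2 \le \phi(k)\,\|\alpha^{(i)}\|^2.
\]

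Next, by Cauchy-Schwarz applied to the vectors $X\alpha^{(i)}/\sqrt{n}\in\RR^n$,
\[
\|\delta\|_{2,n}^2 \;=\; \Big\|\sum_{i=1}^N \tfrac{1}{\sqrt n}X\alpha^{(i)}\Big\|^2 \;\le\; N\sum_{i=1}^N \|\alpha^{(i)}\|_{2,n}^2 \;\le\; N\,\phi(k)\sum_{i=1}^N\|\alpha^{(i)}\|^2.
\]
The pieces are orthogonal on $T^c$, so
\[
\sum_{i=1}^N\|\alpha^{(i)}\|^2 \;=\; \sum_{i=1}^N\Big(\tfrac{1}{N^2}\|\delta_T\|^2+\|\delta_{S_i}\|^2\Big)\;=\;\tfrac{1}{N}\|\delta_T\|^2+\|\delta_{T^c}\|^2\;\le\;\|\delta\|^2,
\]
using $N\ge 1$. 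Combining gives $\|\delta\|_{2,n}^2\le N\phi(k)\|\delta\|^2=\lceil\ell\rceil\phi(k)\|\delta\|^2$, and taking the maximum over the feasible set of $\phi(\lceil\ell k\rceil)$ yields the claim.

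The only thing to be careful about is the decomposition: one must put a $1/N$ share of $\delta_T$ into every piece (so the $T$-components recombine correctly) rather than assigning $\delta_T$ to a single piece (which would leave the other pieces with zero on $T$ but still give a correct but loose bound). There is no real obstacle here — the argument is purely linear-algebraic and does not rely on Condition RSE beyond the definition of $\phi$.
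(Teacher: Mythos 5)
Your proof is correct and is essentially the paper's own argument: you use the same decomposition (an equal $1/\lceil\ell\rceil$ share of $\delta_T$ in every piece, with the support outside $T$ partitioned into blocks of size at most $k$, justified by $\lceil\ell k\rceil\le\lceil\ell\rceil k$) and the same norm bookkeeping $\sum_i\|\alpha^{(i)}\|^2\le\|\delta\|^2$. Your Cauchy--Schwarz step $\|\sum_i X\alpha^{(i)}\|^2\le\lceil\ell\rceil\sum_i\|X\alpha^{(i)}\|^2$ is just the paper's bound $2|\alpha_i'W\alpha_j|\le\alpha_i'W\alpha_i+\alpha_j'W\alpha_j$ (positive semidefiniteness of the Gram matrix) in different notation, so the two proofs coincide.
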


A version of this lemma for (unrestricted) sparse eigenvalues has been
proven by~\cite{BC-SparseQR}. The combination of the preceding two
lemmas gives the following sparsity theorem.
\begin{theorem}[(Sparsity bound for {Lasso} under data-driven
penalty)]\label{Thm:Sparsity}
Suppose that Conditions \textup{\hyperref[conm]{M}} and \hyperref[conrec]{$\mathit{RE}(\cc)$}
hold, and let $\widehat m :=
|\widehat T\setminus T|$. The event $\lambda\geq cn\|S\|_\infty$
implies that
\[
\hat m \leq s \cdot\Bigl[ \min_{m \in\mathcal{M}}\phi(m\wedge n)
\Bigr] \cdot L_n,
\]
where $\mathcal{M}=\{ m \in\mathbb{N}\dvtx
m > s \phi(m\wedge n)\cdot2L_n \}$ and $L_n = [ 2\cc/\kappa(\cc) +
3(\cc+1)nc_s/(\lambda\sqrt{s})]^2$.
\end{theorem}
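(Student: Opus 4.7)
\textbf{Proof plan for Theorem (Sparsity bound for {lasso} under data-driven penalty).}

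The plan is to chain the empirical pre-sparsity bound (Lemma \ref{Lemma:Sparsity{lasso}}) with the sub-linearity of restricted sparse eigenvalues (Lemma \ref{Lemma:SparseEigenvalueIMP}) to upgrade a bound involving $\phi(\hat m)$ into one involving $\phi(m\wedge n)$ for any $m \in \mathcal{M}$. First I would invoke Lemma \ref{Lemma:Sparsity{lasso}}: under the event $\lambda \geq c n \|S\|_\infty$, after factoring out $\sqrt{\phi(\hat m)}$, dividing by $\sqrt{s}$, and squaring, the pre-sparsity inequality gives the clean form $\hat m \leq s \, \phi(\hat m)\, L_n$. Since any {lasso} solution with minimum support has $|\widehat T| \leq n$, we have $\hat m \leq n$ and hence $\phi(\hat m) = \phi(\hat m \wedge n)$.

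Next, for any fixed $m \in \mathcal{M}$, I would split into two cases to compare $\phi(\hat m \wedge n)$ with $\phi(m \wedge n)$. In the easy case $\hat m \leq m \wedge n$, monotonicity of $k \mapsto \phi(k)$ immediately gives $\phi(\hat m) \leq \phi(m\wedge n)$, so the pre-sparsity bound yields $\hat m \leq s\,\phi(m\wedge n)\, L_n$ directly. In the opposite case $\hat m > m \wedge n$, I would apply Lemma \ref{Lemma:SparseEigenvalueIMP} with $\ell = \hat m/(m\wedge n) \geq 1$ and $k = m\wedge n$ to get
\begin{equation*}
\phi(\hat m) \;\leq\; \lceil \hat m/(m\wedge n)\rceil\, \phi(m\wedge n) \;\leq\; \bigl(2\hat m/(m\wedge n)\bigr)\, \phi(m\wedge n),
\end{equation*}
where the last step uses $\lceil x \rceil \leq 2x$ for $x \geq 1$. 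Substituting into $\hat m \leq s\,\phi(\hat m)\, L_n$ and cancelling $\hat m$ gives $m\wedge n \leq 2 s\, \phi(m\wedge n)\, L_n$, which contradicts the defining inequality of $\mathcal{M}$ (since $m\wedge n \leq m$ on one side while $m > 2 s\, \phi(m\wedge n)\, L_n$ on the other). Hence the second case is vacuous for $m \in \mathcal{M}$, and the first case's bound is the one that applies.

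Finally, since $\hat m \leq s\,\phi(m\wedge n)\, L_n$ holds simultaneously for every $m \in \mathcal{M}$, I would take the infimum over $\mathcal{M}$ to obtain $\hat m \leq s\,\bigl[\min_{m\in\mathcal{M}} \phi(m\wedge n)\bigr]\, L_n$, which is exactly the stated bound. The main subtlety in the argument is the case analysis around whether $\hat m$ exceeds $m\wedge n$: this is where the definition of $\mathcal{M}$ (with the factor $2L_n$ rather than $L_n$) is calibrated exactly so that the large-$\hat m$ scenario is ruled out via sub-linearity. Everything else reduces to algebraic manipulation of the pre-sparsity inequality and the monotonicity/sub-linearity of $\phi$.
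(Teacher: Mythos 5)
Your proposal is correct and follows essentially the same route as the paper's proof: the empirical pre-sparsity lemma gives $\hat m \leq s\,\phi(\hat m)\,L_n$, the sub-linearity lemma combined with the defining inequality of $\mathcal{M}$ rules out $\hat m$ exceeding $m\wedge n$ for any $m\in\mathcal{M}$, and then monotonicity of $\phi$ together with $\hat m\leq n$ yields $\hat m\leq s\,\phi(m\wedge n)\,L_n$, which is minimized over $\mathcal{M}$. The only hairline point is that in your second case with $m>n$ the derived inequality $m\wedge n\leq 2s\,\phi(m\wedge n)\,L_n$ does not by itself contradict $m\in\mathcal{M}$; that subcase is instead vacuous because $\hat m>m\wedge n=n$ contradicts $\hat m\leq n$, a fact you already established, and the paper's own proof treats this point with the same brevity.
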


The main implication of Theorem~\ref{Thm:Sparsity} is that under (\ref
{Assump:cs}), if $ \min_{m \in\mathcal{M}}\phi(m\wedge n)\lesssim1$
and $\lambda\geq cn\|S\|_\infty$ hold with high probability,
which is valid by Lemma~\ref{Lemma:Plausible}
for important designs and by the choice of penalty level (\ref{choice
of lambda}), then, with high probability,
%
%
\begin{equation}\label{eq: oracle sparsity}
\hat m \lesssim s.
\end{equation}
Consequently, for these designs and penalty levels,the sparsity of
{Lasso} is of the same order as that of the oracle, namely $\widehat s
:=|\widehat T| \leq s + \widehat m \lesssim s$, with high probability.
This is because
$ \min_{m \in\mathcal{M}}\phi(m) \ll\phi(n) $ for these designs,
which allows us to sharpen the
previous sparsity bound (\ref{bad sparsity}) considered by \cite
{BickelRitovTsybakov2009} and~\cite{MY2007}. Moreover, our new bound is
comparable to the bounds of~\cite{ZhangHuang2006}
in terms of order of sharpness, but it requires a smaller penalty level
$\lambda$, which also does not depend on the unknown sparse eigenvalues
(as in~\cite{ZhangHuang2006}).
%
%
\section{Performance of post-model selection estimators with a generic
model selector}\label{Sec:PostModel}
Here we present a general result on the performance of a post-model
selection estimator
with a generic model selector.
\begin{theorem}[(Performance of post-model selection estimator with a
generic model selector)]\label{Thm:2StepMain} Suppose that Condition
\textup{\hyperref[conm]{M}} holds, and let $\hat\beta$ be any first-step estimator acting as
the model selector. Denote by
$\widehat T := \operatorname{support}(\widehat\beta)$ the model that
it selects, such
that $|\widehat T| \leq n$. Let $\widetilde\beta$ be the post-model
selection estimator defined by
%
%
\begin{equation}\label{Def:TwoStep}
\widetilde\beta\in\arg\min_{\beta\in\RR^p} \widehat Q(\beta) \dvtx
\beta_j = 0\qquad \mbox{for each } j \in\widehat T^c.
\end{equation}
Let
$ B_n := \widehat Q(\hat\beta) - \widehat Q(\beta_0) \mbox{ and } C_n
:= \widehat Q(\beta_{0\widehat T}) - \widehat Q(\beta_0)$
and  $\hat m=|\widehat T \setminus T| $ be the number of incorrect
regressors selected. Then, if Condition \hyperref[conrsem]{$\mathit{RSE}(\widehat m)$} holds, for
any $\varepsilon>0$, there is a constant $K_\varepsilon$ independent
of $n$ such that with probability at least $1-\varepsilon$, for
$\widetilde f_i = x_i'\widetilde\beta$, we have
\[
\|\widetilde f - f\|_{\Pn,2} \leq
K_{\varepsilon}\sigma\sqrt{\frac{\widehat m \log p + (\widehat m + s
) \log(\mathrm{e}{\mu(\widehat m)})}{n}}+3c_s + \sqrt{(B_n)_+ \wedge
(C_n)_+}.\nonumber\label{Bound:2step}
\]
Furthermore, for any $\varepsilon>0$, there is a constant
$K_\varepsilon$ independent of $n$ such that with probability at least
$1-\varepsilon$,
\begin{eqnarray*}
 B_n &\leq& \| \widehat\beta-\beta_0 \|_{2,n}^2 + \Biggl[
K_{\varepsilon} \sigma\sqrt{\frac{ \widehat m \log p + ( \widehat m +
s ) \log(\mathrm{e}{\mu(\widehat m)})}{n}} + 2 c_s \Biggr] \|\widehat\beta-
\beta_0\|_{2,n} \nonumber\label{Bound:Cn}, \\
\label{Bound:Bn}
 C_n &\leq& 1\{T \not\subseteq\widehat T\} \Biggl( \|\beta
_{0\widehat T^c}\|_{2,n}^2 + \Biggl[K_\varepsilon\sigma\sqrt{\frac{\log
{s\choose\widehat k} +\widehat k\log(\mathrm{e}{\mu(0)})}{n}} + 2c_s \Biggr] \|
\beta_{0\widehat T^c}\|_{2,n} \Biggr).
\end{eqnarray*}
\end{theorem}

Three implications of Theorem~\ref{Thm:2StepMain} are worth noting. First,
the bounds on the prediction norm stated in Theorem~\ref{Thm:2StepMain}
apply to the {OLS} estimator on the components selected by any
first-step estimator $\widehat\beta$,
provided that we can bound both $\| \widehat\beta-\beta_0 \|_{2,n}$,
the rate of convergence of the first-step estimator, and $\widehat m$,
the number of incorrect regressors selected by the model selector.
Second, note that if the selected model
contains the true model, $T \subseteq\widehat T$, then we have
$(B_n)_+\wedge(C_n)_+=C_n =0$. In that case, $B_n$ has no affect on the
rate, and the performance of the second-step estimator is determined by
the sparsity $\widehat m$ of the first-step estimator, which controls
the magnitude of the empirical errors. Otherwise, if the selected model
fails to contain the true model (i.e., $T \not\subseteq\widehat T$),
then the performance of the second-step estimator
is determined by both the sparsity $\widehat m$ and the minimum between
$B_n$ and $C_n$. The quantity $B_n$ measures the
in-sample loss of fit induced by the first-step estimator relative to
the ``true'' parameter value $\beta_0$, and $C_n$ measures the in-sample
loss of fit
induced by truncating the ``true'' parameter $\beta_0$ outside the
selected model $\widehat T$.

The proof of Theorem~\ref{Thm:2StepMain} relies on the sparsity-based
control of the empirical error provided by the following lemma.
\begin{lemma}[(Sparsity-based control of empirical error)]\label{sparse}
Suppose that Condition \textup{\hyperref[conm]{M}} holds.

(1) For any $\varepsilon>0$, there is a constant $K_\varepsilon$
independent of $n$ such that with probability at least $1- \varepsilon
$,
\[
\bigl| \widehat Q(\beta_0+\delta) - \widehat Q(\beta_0) - \| \delta\|
^2_{2,n} \bigr| \leq K_{\varepsilon} \sigma\sqrt{\frac{ m \log p + ( m + s
) \log(\mathrm{e}{\mu(m)})}{n}} \| \delta\|_{2,n}+ 2 c_s \| \delta\|_{2,n},
\]
uniformly for all $\delta\in\Bbb{R}^p$ such that $\|
\delta
_{T^c}\|_0\leq m$, and uniformly over $ m \leq n$.

(2) Furthermore, with at least the same probability,
\[
\bigl| \widehat Q(\beta_{0 \widetilde T}) - \widehat Q(\beta_0) - \|
\beta
_{0 \widetilde T^c} \|^2_{2,n} \bigr| \leq K_{\varepsilon} \sigma\sqrt
{\frac{ \log{s\choose k} + k\log(\mathrm{e}{\mu(0)})}{n}} \| \beta_{0
\widetilde T^c} \|_{2,n}+ 2 c_s \| \beta_{0 \widetilde T^c} \|_{2,n},
\]
uniformly for all $\widetilde T \subset T$ such that
$|T\setminus\widetilde T | = k$, and uniformly over $ k \leq s$.
\end{lemma}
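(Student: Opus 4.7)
\emph{Plan.} I would prove both parts in parallel by reducing each to a single high-probability bound on a sparse linear Gaussian functional. Using $y_\ii = x_\ii'\beta_0 + r_\ii + \epsilon_\ii$ and expanding squares,
\[
\widehat Q(\beta_0+\delta) - \widehat Q(\beta_0) - \|\delta\|_{2,n}^2 \;=\; -2\En[r_\ii\, x_\ii'\delta] \;-\; 2\En[\epsilon_\ii\, x_\ii'\delta].
\]
Cauchy--Schwarz in $L^2(\Pn)$ gives $|\En[r_\ii x_\ii'\delta]|\leq \|r\|_{\Pn,2}\|\delta\|_{2,n} = c_s\|\delta\|_{2,n}$, yielding the deterministic $2c_s\|\delta\|_{2,n}$ term of the bound. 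For part~(2), taking $\delta = -\beta_{0\widetilde T^c}$, which is supported on $T\setminus\widetilde T\subset T$ of cardinality $k$, reduces the left-hand side to the same identity. It thus remains to bound $\sup|\En[\epsilon_\ii x_\ii'\delta]|$ uniformly over the specified sparse classes of $\delta$.

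Fix $J = T\cup S$ with $S\subset T^c$, $|S|=m$, so $|J|=s+m$, and let $\mathcal{E}_J := \{\delta\in\RR^p : \supp(\delta)\subset J,\ \|\delta\|_{2,n}\leq 1\}$. Conditional on $X$, for any single $\delta^*$,
\[
\En[\epsilon_\ii x_\ii'\delta^*] \sim N\bigl(0,\sigma^2\|\delta^*\|_{2,n}^2/n\bigr),\qquad \Pr\bigl(|\En[\epsilon_\ii x_\ii'\delta^*]| > \sigma\sqrt{2t/n}\bigr)\leq 2e^{-t}.
\]
To upgrade this to a uniform bound over $\mathcal{E}_J$, I cover $\mathcal{E}_J$: by Condition RSE$(m)$ we have $\|\delta\|\leq 1/\widetilde\kappa(m)$ on $\mathcal{E}_J$, and any $\ell_2$-ball of radius $\eta/\sqrt{\phi(m)}$ is contained in a $\|\cdot\|_{2,n}$-ball of radius $\eta$ (since differences of $\mathcal{E}_J$-elements still have $T^c$-support of cardinality at most $m$). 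Hence $\mathcal{E}_J$ admits an $\eta$-cover in $\|\cdot\|_{2,n}$ of size at most $(1+2\mmu{m}/\eta)^{s+m}$. Taking $\eta=1/2$ and applying a linear interpolation between the cover center and the target yields $\sup_{\mathcal{E}_J}|\En[\epsilon_\ii x_\ii'\delta]|\leq 2\max_{\delta^*\in N_{1/2}}|\En[\epsilon_\ii x_\ii'\delta^*]|$.

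The rest is assembly. For each $m\geq 0$, I union over the $\binom{p-s}{m}\leq (ep/m)^m$ choices of $S$ (any $\delta$ with $\|\delta_{T^c}\|_0\leq m$ embeds into some such $J$) and over the $(1+4\mmu{m})^{s+m}$ cover points; this inflates the tail exponent by $m\log(ep/m) + (s+m)\log(1+4\mmu{m})$. Peeling across $m\in\{0,\ldots,n\}$ using the geometric probability budget $\varepsilon 2^{-(m+1)}$ costs an additional $m\log 2 + \log(2/\varepsilon)$: the former is absorbed into $m\log p$, the latter into $K_\varepsilon$. This completes part~(1). Part~(2) is structurally identical, with the replacements $\binom{p-s}{m}\mapsto\binom{s}{k}$ (subsets of $T$ of size $k$), $\mmu{m}\mapsto\mmu{0}$ (since $\delta$ is supported within $T$, so $\|\delta_{T^c}\|_0=0$), and peeling over $k\in\{0,\ldots,s\}$. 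The main technical hurdle is coordinating the probability budget across the peeling so that no extraneous $\log n$ factor survives; the geometric splitting does exactly this. The factor $\log(e\mmu{m})$ in the final bound is precisely the price of covering the $\|\cdot\|_{2,n}$-ellipsoid $\mathcal{E}_J$ by an $\ell_2$-cover inside the $\ell_2$-ball of radius $1/\widetilde\kappa(m)$.
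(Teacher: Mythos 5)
Your proposal is correct, and its skeleton coincides with the paper's: the same decomposition $\widehat Q(\beta_0+\delta)-\widehat Q(\beta_0)-\|\delta\|_{2,n}^2=-2\En[\epsilon_\ii x_\ii'\delta]-2\En[r_\ii x_\ii'\delta]$, Cauchy--Schwarz giving the $2c_s\|\delta\|_{2,n}$ term, the reduction of part (2) via $\delta=-\beta_{0\widetilde T^c}$ (so that effectively $s=0$, $p=s$, $m=k$ and $\mmu{0}$ suffices), a union bound over support sets, and a covering bound of order $(C\mmu{m})^{m+s}$ that produces the $\log(e\mmu{m})$ price. Where you genuinely differ is the device that turns the covering bound into a maximal inequality for the Gaussian part: the paper packages this as its Lemma 6 and invokes the Samorodnitsky--Talagrand inequality (Proposition A.2.7 in van der Vaart and Wellner) applied to each class $\mathcal{G}_{\widetilde T}=\{\epsilon_i x_i'\delta/\|\delta\|_{2,n}\}$, whereas you exploit the linearity of $\delta\mapsto\En[\epsilon_\ii x_\ii'\delta]$ so that a single-scale $1/2$-net of the prediction-norm ball $\mathcal{E}_J$, the standard inequality $\sup_{\mathcal{E}_J}\le 2\max_{\mathrm{net}}$, pointwise Gaussian tails, and a union bound suffice --- no chaining or off-the-shelf Gaussian maximal inequality is needed. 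Your route is more elementary and self-contained (it works precisely because the process is linear in $\delta$, so one scale of discretization captures everything); the paper's route yields a reusable maximal inequality with explicit constants and would also apply to classes where the one-step net trick is unavailable. Two bookkeeping points, neither a gap: to have parts (1) and (2) hold ``with at least the same probability'' you should split the budget (say $\varepsilon/2$ for each family before the geometric peeling), and absorbing the additive $\log(1/\varepsilon)$, $m\log 2$, and the discrepancy between $\log(1+4\mmu{m})$ and $\log(e\mmu{m})$ into $K_\varepsilon$ uses $\mmu{m}\ge 1$ and the fact that $m\log p+(m+s)\log(e\mmu{m})\ge 1$ in all nontrivial cases, exactly as the paper does when it sets $K_\varepsilon=6\sqrt{2}\log^{1/2}\max\{e,D,1/(e^s\varepsilon[1-1/e])\}$.
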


The proof of this lemma in turn relies on the following maximal
inequality, the proof of which involves the use of a
Samorodnitsky--Talagrand type of inequality.
\begin{lemma}[(Maximal inequality for a collection of empirical
processes)]\label{master lemma}
Let $\epsilon_i \sim N(0,\sigma^2)$ be independent for $i=1,\ldots,n$,
and for $m=1,\ldots,n$, define
\[
e_n(m,\eta) := \sigma2\sqrt{2} \Biggl(\sqrt{ \log\pmatrix{p\cr m}} + \sqrt
{(m+s)\log( D {\mu(m)})} + \sqrt{(m+s)\log(1/\eta)}\Biggr)
\]
for any $\eta\in
(0,1)$ and some universal constant $D$. Then,
\[
\sup_{\|\delta_{T^c}\|_0\leq m, \|\delta\|_{2,n} > 0} \biggl| \mathbb
{G}_n\biggl(\frac{\epsilon_i x_i'\delta}{\|\delta\|_{2,n}
}\biggr) \biggr| \leq e_n(m,\eta) \qquad \mbox{ for all } m \leq n,
\]
with probability at least $1-\eta \mathrm{e}^{-s}/(1-1/\mathrm{e})$.
\end{lemma}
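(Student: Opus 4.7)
The plan is to combine a direct computation of the supremum on any fixed coordinate block with a two-level union bound --- over choices of the extra coordinates $S \subset T^c$ and over the sparsity level $m$. First, observe that any $\delta$ with $\|\delta_{T^c}\|_0 \leq m$ can be viewed as supported on $T \cup S$ for some $S \subset T^c$ with $|S| = m$ (padding with zeros if necessary), so the supremum over $\delta$ reduces to the double supremum over $S$ and over $\delta$ supported on $T \cup S$. For fixed $S$, set $\xi_j := \Gn(\epsilon_i x_{ij})$, so that $Z_\delta := \Gn(\epsilon_i x_i'\delta) = \sum_j \xi_j \delta_j$. Under RSE$(m)$, the empirical Gram submatrix $\hat\Sigma_{T\cup S} := \En[x_i[T\cup S]\,x_i[T\cup S]']$ is invertible; Lagrange duality for maximizing a linear functional on the ellipsoid $\{\delta : \supp(\delta)\subset T\cup S,\ \|\delta\|_{2,n}=1\}$ then yields
$$
\sup_{\delta : \supp(\delta)\subset T\cup S,\ \|\delta\|_{2,n}=1} |Z_\delta| \;=\; \sqrt{\xi_{T\cup S}^\top \hat\Sigma_{T\cup S}^{-1} \xi_{T\cup S}}.
$$
Since $\xi_{T\cup S} \sim N(0, \sigma^2 \hat\Sigma_{T\cup S})$, writing $\xi_{T\cup S} = \sigma \hat\Sigma_{T\cup S}^{1/2} g^{(S)}$ for $g^{(S)} \sim N(0, I_{m+s})$ collapses the quadratic form to $\sigma\|g^{(S)}\|$.

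Next, $\|g^{(S)}\|$ is $1$-Lipschitz in $g^{(S)}$, so by Borell-TIS (or an elementary $\chi^2$ tail bound), $\Pp(\|g^{(S)}\| > \sqrt{m+s} + u) \leq e^{-u^2/2}$ for each $S$. I would then union-bound across the $\binom{p-s}{m} \leq \binom{p}{m}$ subsets $S \subset T^c$ of size $m$, and further across $m = 1,\ldots,n$ by allocating failure probability $\eta\,e^{-(m+s)}$ per level $m$, so that the total failure probability telescopes to $\eta e^{-s}\sum_{m\geq 0} e^{-m} = \eta e^{-s}/(1-1/e)$, matching the claim. Solving $\binom{p}{m} e^{-u^2/2} = \eta e^{-(m+s)}$ gives $u^2 = 2\log\binom{p}{m} + 2(m+s) + 2\log(1/\eta)$, hence $u \leq \sqrt{2}\bigl(\sqrt{\log\binom{p}{m}} + \sqrt{m+s} + \sqrt{\log(1/\eta)}\bigr)$.

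Finally, an algebraic cleanup recovers the stated form of $e_n(m,\eta)$: from $\sqrt{m+s} + u \leq 2\sqrt{2}\bigl(\sqrt{\log\binom{p}{m}} + \sqrt{m+s} + \sqrt{\log(1/\eta)}\bigr)$, absorb the bare $\sqrt{m+s}$ into $\sqrt{(m+s)\log(D\mmu{m})}$ using $\mmu{m} \geq 1$ (which follows from $\widetilde\kappa(m) \leq \sqrt{\phi(m)}$) after fixing any universal $D \geq e$, and bound $\sqrt{\log(1/\eta)} \leq \sqrt{(m+s)\log(1/\eta)}$ using $m+s \geq 1$. The main obstacle is primarily bookkeeping: calibrating the probability budget across the nested union bound so as to obtain the precise prefactor $e^{-s}/(1-1/e)$, and checking that the ellipsoidal exact identity $\sup|Z_\delta|/\|\delta\|_{2,n} = \sigma\|g^{(S)}\|$ can legitimately substitute for the Samorodnitsky-Talagrand chaining hinted at in the excerpt --- which it can, because on each fixed coordinate block the supremum admits an exact distributional representation, bypassing the need to bound the intrinsic Gaussian metric entropy through a covering argument.
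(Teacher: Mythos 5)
Your proposal is correct, but it takes a genuinely different route from the paper. The paper fixes a support set $\widetilde T$ with $|\widetilde T\setminus T|\le m$, treats $\{\epsilon_i x_i'\delta/\|\delta\|_{2,n}\}$ as a Gaussian process indexed by $\delta$ on the Euclidean sphere, bounds its covering numbers via a Lipschitz estimate in which the constant $2\sigma\mmu{m}$ appears (this is exactly where $\mmu{m}$ enters the statement), applies the Samorodnitsky--Talagrand inequality to each block, and then unions over the $\binom{p}{m}$ blocks and over $m$ with the same geometric allocation $\eta e^{-(m+s)}$ that you use. You instead observe that on each block the process is a Gaussian \emph{linear} form $\xi'\delta$ with $\xi\sim N(0,\sigma^2\widehat\Sigma_{T\cup S})$, so the supremum over the prediction-norm ellipsoid is exactly $\sigma\|g^{(S)}\|$ with $g^{(S)}\sim N(0,I_{m+s})$, and a single Lipschitz/$\chi$ concentration bound replaces the entropy-plus-chaining step; the union-bound bookkeeping and the algebra recovering $e_n(m,\eta)$ (absorbing $\sqrt{m+s}$ via $\mmu{m}\ge 1$, $D\ge e$, and $\sqrt{\log(1/\eta)}\le\sqrt{(m+s)\log(1/\eta)}$) are all correct. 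Your argument is more elementary and in fact yields a slightly sharper bound that does not involve $\mmu{m}$ at all, while the paper's entropy route is what a reader would need if the exact finite-dimensional representation were unavailable (e.g., non-Gaussian or non-linear index classes). Two minor points: the exact duality identity needs $\widehat\Sigma_{T\cup S}$ nonsingular, which is not formally a hypothesis of the lemma --- but since $\xi$ lies in the range of $\widehat\Sigma_{T\cup S}$ almost surely, the same identity holds with the pseudo-inverse and the bound $\sigma\|Pg^{(S)}\|\le\sigma\|g^{(S)}\|$, so nothing is lost (alternatively $\mmu{m}=\infty$ renders the claim vacuous); and the paper's union over $m$ starts at $m=0$, which your sum $\sum_{m\ge 0}e^{-m}=1/(1-1/e)$ already accommodates.
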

%
%
%
\section{Performance of least squares after Lasso-based model
selection}\label{sec5}
In this section we apply our results on post-model selection estimators
to the case where {Lasso} is the first-step estimator. Our previous generic
results allow us to use the sparsity bounds and rate of convergence of
{Lasso} to derive the rate of convergence of post-model selection
estimators in the parametric and nonparametric models.
%
\subsection{Performance of {OLS} post-{Lasso}}
Here we show that the {OLS} post-{Lasso} estimator has good
theoretical performance despite (generally) imperfect selection of the
model by {Lasso}.
\begin{theorem}[(Performance of {OLS} post-{Lasso})]\label
{Cor:2StepNonparametric}
Suppose that Conditions \textup{\hyperref[conm]{M}}, \hyperref[conrec]{$\mathit{RE}(\cc)$}, and \hyperref[conrsem]{$\mathit{RSE}(\widehat m)$} hold,
where $\cc= (c+1)/(c-1)$ and $\widehat m = |\widehat T\setminus T|$.
If $\lambda\geq c n\| S\|_{\infty} $ occurs with probability at least
$1-\alpha$, then for any $\varepsilon> 0$, there is a constant
$K_\varepsilon$ independent of $n$ such that with probability at least
$1- \alpha- \varepsilon$, for $\widetilde f_i = x_i'\widetilde\beta$,
we have
\begin{eqnarray*}
\|\widetilde f - f\|_{\Pn,2} &\leq&
K_{\varepsilon}\sigma\sqrt{\frac{\widehat m \log p + (\widehat m
+ s ) \log(\mathrm{e}{\mu(\widehat m)})}{n}} \\
&&{}+ 3c_s + 1\{T \not\subseteq
\widehat T
\}\sqrt{\frac{\lambda\sqrt{s}}{n \kappa(1)} \biggl(
\frac{(1+c)\lambda\sqrt{s}}{cn \kappa(1)} + 2 c_s\biggr)}.
\end{eqnarray*}

In particular, under Condition \textup{\hyperref[conv]{V}} and the data-driven choice of
$\lambda$ specified in (\ref{choice of lambda}) with $\log(1/\alpha)
\lesssim\log p$, $u/\ell\lesssim1$, for any $\varepsilon>0$ there is
a constant $K_{\varepsilon,\alpha}'$ such that
%
%
\begin{eqnarray}\label{Eq:Post{Lasso}simple}
\|\widetilde f - f\|_{\Pn,2} & \leq&3c_s +
K_{\varepsilon,\alpha}' \sigma\Biggl[\sqrt{ \frac{ \widehat m \log
(p\mathrm{e}{\mu(\widehat m)})}{n} }+ \sqrt{ \frac{ s \log(\mathrm{e}{\mu(\widehat
m)})}{n} }\Biggr]\nonumber
\\[-8pt]
\\[-8pt]
&&{}+ 1\{T \not\subseteq\widehat T \} \Biggl[K_{\varepsilon
,\alpha}'\sigma\sqrt{ \frac{s \log p}{ n}} \frac{1}{ \kappa(1)} + c_s\Biggr]\nonumber
\end{eqnarray}
with probability at least $1-\alpha-\varepsilon-\tau$.\vspace*{-2pt}
\end{theorem}

This theorem provides a performance bound for {OLS} post-{Lasso} as a
function of {Lasso}'s sparsity (characterized by $\widehat m$), rate of
convergence, and model selection ability. For common designs, this
bound implies that {OLS} post-{Lasso} performs at least as well as
{Lasso} and can be strictly better in some cases, and has a smaller
regularization bias. We provide further theoretical comparisons in what
follows, and give computational examples supporting these comparisons
in the supplemental article~\cite{BC-PostL1-SA}. It is also worth
repeating here that performance bounds in other norms of interest
follow immediately by the triangle inequality and by the definition of
$\widetilde\kappa$, as discussed in Remark~\ref{remark:NormsAndLowerBound}.

The following corollary summarizes the performance of {OLS}
post-{Lasso} under commonly used designs.\vspace*{-2pt}
\begin{corollary}[(Asymptotic performance of {OLS} post-{Lasso})]\label
{Cor:Post{Lasso}}
Under the conditions of Theorem~\ref{Cor:2StepNonparametric}, (\ref
{Assump:cs}), and
(\ref{SimpleSC}), as $n$ grows, we have that
\[
\|\widetilde f - f\|_{\Pn,2}\lesssim_P
\cases{\displaystyle\sigma\sqrt{ \frac{ s \log p}{n} } + c_s,&\quad in general,\vspace*{4pt}\cr
 \displaystyle\sigma\sqrt{ \frac{\mathrm{o}(s) \log p}{n} } + \sigma\sqrt{ \frac{ s
}{n} } +c_s,  &\quad if $\widehat m= \mathrm{o}_P(s)$ and $T \subseteq\widehat T$ wp
$\to1$,\vspace*{4pt}\cr
\displaystyle\sigma\sqrt{ s/n } + c_s,& \quad if $T = \widehat T$ wp $\to1$.}
 \]
\end{corollary}
\begin{remark}[(Comparison of the performance of {OLS} post-{Lasso} and
{Lasso})]\label{comment: main} We now compare the upper bounds on the
rates of convergence of {Lasso} and {OLS} post-{Lasso}
under conditions of the corollary. In general, the rates coincide. Of
note, this occurs despite the fact that {Lasso} generally may fail to
correctly select the oracle model $T$ as a subset, that is, $T \not
\subseteq\widehat T$. However, if the oracle model has well-separated
coefficients and conditions and the approximation error does
not dominate the estimation error, then the {OLS} post-{Lasso} rate
improves on the rate of {Lasso}.
Specifically, this occurs if condition (\ref{Assump:cs}) holds and
$\widehat m= \mathrm{o}_P(s)$ and $T \subseteq\widehat T$ wp $\to1$, as under
the conditions of Theorem~\ref{Lemma:Crack} Part 1 or, in the case of
perfect model selection, when $T = \widehat T$ wp $\to1$, as under the
conditions specified by~\cite{Wainright2006}. In such cases, we know
from Corollary~\ref{Cor:LowerBound} that the rates for {Lasso}
are sharp and cannot be faster than $\sigma\sqrt{ s \log p/n}$. Thus
the faster rate of convergence of {OLS} post-{Lasso} over {Lasso} is
strict in such cases.\vspace*{-2pt}
\end{remark}
%
\subsection{Performance of {OLS} post-fit {Lasso}}\vspace*{-2pt}
In what follows we provide performance bounds for {OLS} post-fit
{Lasso} $\widetilde\beta$ defined in equation (\ref{Def:TwoStep}) with
threshold (\ref{Eq:TL}) for the case where the first-step estimator
$\widehat\beta$ is {Lasso}. We let $\widetilde T$ denote the model selected.\vadjust{\goodbreak}
\begin{theorem}[(Performance of {OLS} post-fit {Lasso})]\label
{Cor:Trimmed{Lasso}}
Suppose that Conditions \textup{\hyperref[conm]{M}}, \hyperref[conrec]{$\mathit{RE}(\cc)$}, and \hyperref[conrsem]{$\mathit{RSE}(\widetilde
m)$}
hold, where $\cc= (c+1)/(c-1)$ and $\widetilde m = |\widetilde
T\setminus T|$. If $\lambda\geq c n\| S\|_{\infty} $ occurs with
probability at least $1-\alpha$, then for any $\varepsilon> 0$, there
is a constant $K_\varepsilon$ independent of $n$ such that with
probability at least $1- \alpha- \varepsilon$, for $\widetilde f_i =
x_i'\widetilde\beta$, we have
\begin{eqnarray*}
\|\widetilde f - f\|_{\Pn,2} &\leq&
K_{\varepsilon}\sigma\sqrt{\frac{\widetilde m \log p + (\widetilde m
+ s ) \log(\mathrm{e}{\mu(\widetilde m)})}{n}} \\[-2pt]
&&{}+3c_s +1\{T \not\subseteq
\widetilde T \}\sqrt{\frac{\lambda\sqrt{s}}{n\kappa(1)}\biggl( \frac
{(1+c)\lambda\sqrt{s}}{cn\kappa(1)} + 2c_s\biggr)}.
\end{eqnarray*}

Under Condition \textup{\hyperref[conv]{V}} and the data-driven choice of $\lambda$ specified
in (\ref{choice of lambda}) with $\log(1/\alpha) \lesssim\log p$,
$u/\ell\lesssim1$, for any $\varepsilon>0$ there is a constant
$K_{\varepsilon,\alpha}'$ such that
%
%
\begin{eqnarray}\label{Eq:Post-Good-Simple}
\|\widetilde f - f\|_{\Pn,2} & \leq&3c_s + K_{\varepsilon,\alpha}'
\sigma\Biggl[\sqrt{ \frac{ \widetilde m \log(p\mathrm{e}{\mu(\widetilde m)})}{n} }+
\sqrt{ \frac{ s \log(\mathrm{e}{\mu(\widetilde m)})}{n} } \Biggr] \nonumber
\\[-9pt]
\\[-9pt]
&&{}+ 1\{T \not\subseteq\widetilde T \} \Biggl[ K_{\varepsilon
,\alpha}'\sigma\sqrt{ \frac{s \log p}{ n}} \frac{1}{ \kappa(1)} +
c_s\Biggr ],\nonumber
\end{eqnarray}
with probability at least $1-\alpha-\varepsilon-\tau$.\vspace*{-2pt}
\end{theorem}

This theorem provides a performance bound for {OLS} post-fit {Lasso} as
a function of its sparsity (characterized by $\widetilde m$), {Lasso}'s
rate of convergence, and the model selection ability of the
thresholding scheme. Generally, this bound is as good as the bound for
{OLS} post-{Lasso}, because the {OLS} post-fitness-thresholded {Lasso}
thresholds as much as possible subject to maintaining a certain
goodness of fit. Another appealing feature is that this estimator
determines the thresholding level in a completely data-driven fashion.
Moreover, by construction, the estimated model is sparser than the
{OLS} post-{Lasso} model, which leads to an improved performance of
{OLS} post-fitness-thresholded {Lasso} over {OLS} post-{Lasso} in some
cases. We provide further theoretical comparisons below and
computational examples in the supplemental article~\cite{BC-PostL1-SA}.

The following corollary summarizes the performance of {OLS} post-fit
{Lasso} under commonly used designs.\vspace*{-2pt}
\begin{corollary}[(Asymptotic performance of {OLS} post-fit {Lasso})]
Under the conditions of Theorem~\ref{Cor:Trimmed{Lasso}}, if conditions
in (\ref{Assump:cs}) and (\ref{SimpleSC}) hold, then as $n$ grows, the
{OLS} post-fitness-thresholded {Lasso} satisfies
\[
\|\widetilde f - f\|_{\Pn,2}\lesssim_P
\cases{\displaystyle\sigma\sqrt{ \frac{ s \log p}{n} } + c_s,&\quad in general,\vspace*{2pt}\cr
 \displaystyle\sigma\sqrt{ \frac{\mathrm{o}(s) \log p}{n} } + \sigma\sqrt{ \frac{ s
}{n} } +c_s,  &\quad if $\widetilde m= \mathrm{o}_P(s)$ and $T \subseteq\widetilde T$ wp
$\to1$,\vspace*{2pt}\cr
\displaystyle\sigma\sqrt{ \frac sn } + c_s,& \quad if $T = \widetilde T$ wp $\to1$.}\vadjust{\goodbreak}
 \]
\end{corollary}
\begin{remark}[(Comparison of the performance of {OLS} post-fit {Lasso},
{Lasso}, and {OLS} post-{Lasso})]\label{comment: main-goof}
Under the conditions of the corollary, the {OLS}
post-fitness-thresholded {Lasso} matches the near-oracle rate of
convergence of {Lasso} and {OLS} post-{Lasso}:
$ \sigma\sqrt{s \log p /n} +c_s$. If $\widetilde m= \mathrm{o}_P(s)$ and $T
\subseteq\widetilde T$ wp $\to1$ and (\ref{Assump:cs}) hold, then
{OLS} post-fit {Lasso} strictly improves on {Lasso}'s rate.
That is, if the oracle model has coefficients well separated from 0 and
the approximation error is not dominant, then the improvement is
strict. An interesting question is whether {OLS} post-{fit} {Lasso}
can outperform {OLS} post-{Lasso} in terms of the rates. We cannot rank
these estimators in terms of rates in general; however, this
necessarily occurs when the {Lasso} does not achieve the sufficient
sparsity but the model selection works well, namely when $\widetilde m
= \mathrm{o}_P(\widehat m)$ and $T \subseteq\widetilde T$ wp $\to1$. Finally,
under conditions ensuring perfect model selection -- namely,
the condition of Theorem~\ref{Lemma:Crack} holding for $t= t_{\gamma
}$ -- {OLS} post-fit {Lasso} achieves the oracle performance, \mbox{$\sigma
\sqrt{s /n}+c_s$}.
\end{remark}
%
\subsection{Performance of the {OLS} post-thresholded {Lasso}}
We next consider the traditional thresholding scheme, which truncates
to 0 all components below a set threshold, $t$. This is arguably the
most widely used thresholding scheme in the literature. To state the
result, recall that $\widehat\beta_{tj} = \widehat\beta_j1\{
|\widehat
\beta_j|>t\}$, $\widetilde m := |\widetilde T \setminus T|$, $m_t :=
|\widehat T \setminus\widetilde T|$ and $\gamma_t := \|\widehat\beta
_t - \widehat\beta\|_{2,n}$, where $\widehat\beta$ is the {Lasso} estimator.
\begin{theorem}[(Performance of {OLS} post-t {Lasso})]\label{Cor:Trimmed2}
Suppose that Conditions \textup{\hyperref[conm]{M}}, \hyperref[conrec]{$\mathit{RE}(\cc)$}, and \hyperref[conrsem]{$\mathit{RSE}(\widetilde m)$} hold,
where $\cc= (c+1)/(c-1)$ and $\widetilde m = |\widetilde T\setminus
T|$. If $\lambda\geq c n\| S\|_{\infty} $ occurs with probability at
least $1-\alpha$, then for any $\varepsilon> 0$, there is a constant
$K_\varepsilon$ independent of $n$ such that with probability at least
$1- \alpha- \varepsilon$, for $\widetilde f_i = x_i'\widetilde\beta$,
we have
\begin{eqnarray*}
\|\widetilde f - f\|_{\Pn,2} & \leq&
K_{\varepsilon} \sigma\sqrt{\frac{\widetilde m \log p + (
\widetilde
m + s ) \log(\mathrm{e}{\mu(\widetilde m)})}{n}} + 3c_s \\
&&{}+1\{T \not\subseteq\widetilde T \} \biggl(\gamma_t + \frac{1+c}{c}\frac
{\lambda\sqrt{s}}{n\kappa(\cc)}+2c_s\biggr)
+ 1\{T \not\subseteq\widetilde T \}\\
&&{}\times\sqrt{ \Biggl[ K_{\varepsilon}
\sigma\sqrt{\frac{ \widetilde m \log p + ( \widetilde m + s ) \log
(\mathrm{e}{\mu(\widetilde m)})}{n}} + 2 c_s \Biggr] \biggl( \gamma_t + \frac
{1+c}{c}\frac{\lambda\sqrt{s}}{n\kappa(\cc)}+2c_s\biggr)},
\end{eqnarray*}
where 
$\gamma_t \leq t\sqrt{\phi(m_t) m_t}$. Under Condition \textup{\hyperref[conv]{V}} and the
data-driven choice of $\lambda$ specified in (\ref{choice of lambda})
for $\log(1/\alpha) \lesssim\log p$, $u/\ell\lesssim1$, for any
$\varepsilon>0$, there is a constant $K_{\varepsilon,\alpha}'$ such
that with probability at least $1-\alpha-\varepsilon- \tau$,
\begin{eqnarray*}
\|\widetilde f - f\|_{\Pn,2} &\leq&3c_s+
K_{\varepsilon,\alpha}' \Biggl[ \sigma\sqrt{\frac{\widetilde m \log
(p\mathrm{e}{\mu(\widetilde m)})}{n}} + \sigma\sqrt{\frac{ s\log(\mathrm{e}{\mu
(\widetilde m)})}{n}} \Biggr]\\
&&{}+ 1\{T \not
\subseteq\widetilde T \} \Biggl[ \gamma_t + K_{\varepsilon,\alpha
}'\sigma\sqrt{\frac{s\log p}{n}}\frac{1}{\kappa(\cc)} + 4c_s \Biggr].
\end{eqnarray*}
\end{theorem}

This theorem provides a performance bound for {OLS} post-thresholded
{Lasso} as a function of (1) its sparsity, characterized by $\widetilde
m$, and improvements in sparsity over {Lasso}, characterized by $m_t$;
(2) {Lasso}'s rate of convergence; (3) the thresholding level $t$ and
resulting goodness-of-fit loss, $\gamma_t$, relative to {Lasso} induced
by thresholding; and (4) the model selection ability of the
thresholding scheme. Generally, this bound may be worse than the bound
for {Lasso}, because the {OLS} post-thresholded {Lasso} potentially
uses too much thresholding, resulting in large goodness-of-fit losses,
$\gamma_t$. We provide further theoretical comparisons below and
computational examples in Section 4 of the supplemental article \cite
{BC-PostL1-SA}.


\begin{remark}[(Comparison of the performance of {OLS} post-thresholded
{Lasso}, {Lasso}, and {OLS} post-{Lasso})]\label{comment: main-goofa}
In this work, we also assume conditions in (\ref{Assump:cs}) and (\ref
{SimpleSC}) presented in the foregoing formal comparisons. Under these
conditions, {OLS} post-thresholded {Lasso} obeys the bound
%
%
\begin{equation}\label{Eq:Trad-SimpleSimple}
\|\widetilde f - f\|_{\Pn,2}\lesssim_P \sigma\sqrt{ \frac{
\widetilde
m \log p}{n} }+ \sigma\sqrt{ \frac{ s }{n} } + c_s + 1\{T \not
\subseteq
\widetilde T \} \Biggl( \gamma_t \vee\sigma\sqrt{\frac{s \log p}{ n}}
\Biggr).
\end{equation}
In this case, we have $\widetilde m \vee m_t \leq s+\widehat m \lesssim
_P s$ by Theorem~\ref{Thm:Sparsity}. In general, the foregoing rate
cannot improve on {Lasso}'s rate of convergence given in Lemma \ref
{Thm:Nonparametric}.

As expected, the choice of $t$, which controls $\gamma_t$ via the bound
$\gamma_t \leq t\sqrt{\phi(m_t) m_t}$, can have a significant effect on
the performance bounds. If
%
%
\begin{equation}\label{best t}
t \lesssim\sigma\sqrt{\frac{\log p}{n}} \qquad\mbox{then } \|
\widetilde f - f\|_{\Pn,2}\lesssim_P \sigma\sqrt{ \frac{ s \log p}{n}
} + c_s.
\end{equation}
The choice (\ref{best t}), suggested by~\cite{Lounici2008} and Theorem
\ref{Thm:Sparsity}, is theoretically sound, because it guarantees that {OLS}
post-thresholded {Lasso} achieves the near-oracle rate of {Lasso}. Note
that to implement the choice (\ref{best t}) in practice, we suggest
setting $t = \lambda/n$, given that the separation of the coefficients
from 0 is unknown in practice. Note that using a much larger $t$ can
lead to inferior rates of convergence.

Furthermore, there is a special class of models -- a neighborhood of
parametric models with well-separated coefficients -- for which
improvements in the rate of convergence of {Lasso} are possible.
Specifically, if $\widetilde m= \mathrm{o}_P(s)$ and $T \subseteq\widetilde T$
wp $\to1$, then {OLS} post-thresholded {Lasso} strictly improves on
the {Lasso}'s rate. Furthermore, if $\widetilde m = \mathrm{o}_P(\widehat m)$
and $T \subseteq\widetilde T$ wp $\to1$, then {OLS} post-thresholded
{Lasso} also outperforms {OLS} post-{Lasso}:
\[
\|\widetilde f - f\|_{\Pn,2} \lesssim_P \sigma\sqrt{ \frac{
\mathrm{o}(\widehat m) \log p}{n} } + \sigma\sqrt{ \frac{ s }{n} } + c_s.
\]
Finally, with the conditions of Theorem~\ref{Lemma:Crack} holding for
given $t$, {OLS} post-thresholded {Lasso} achieves oracle performance,
$\|\widetilde f - f\|_{\Pn,2} \lesssim_P \sigma\sqrt{s /n} + c_s$.
\end{remark}
\begin{appendix}\label{app}
\setcounter{equation}{0}
\section*{Appendix: Proofs}
\setcounter{subsection}{0}
\subsection{\texorpdfstring{Proofs for Section \protect\ref{sec3}}{Proofs for Section 3}}
\begin{pf*}{Proof of Theorem~\ref{Thm:Nonparametric}}
The bound in $\|\cdot\|_{2,n}$ norm follows by the same steps specified
by~\cite{BickelRitovTsybakov2009}, and thus we defer the derivation to
the supplement.

Under the data-driven choice (\ref{choice of lambda}) of $\lambda$ and
Condition \textup{\hyperref[conv]{V}}, we have $c'\widehat\sigma\geq c \sigma$ with probability
at least $1-\tau$, because $c'\geq c/\ell$. Moreover, with the same
probability, we also have $\lambda\leq c'u\sigma\Lambda(1-\alpha|X)$.
The result follows by invoking the $\|\cdot\|_{2,n}$ bound.

The bound in $\|\cdot\|_1$ is proven as follows.
First, assume that $\|
\delta_{T^c}\|_1 \leq2\cc\| \delta_T\|_1.$ In this
case, by the definition of the restricted eigenvalue, we have
$ \|\delta\|_1 \leq(1+2\cc) \|\delta_T\|_1 \leq(1+2\cc)\sqrt{s}\|
\delta\|_{2,n}/\kappa(2\cc), $
and the result follows by applying the first bound to
$\|\delta\|_{2,n}$ because $\cc> 1$. On the other hand, consider the
case where $\|\delta_{T^c}\|_1 > 2\cc\|\delta_T\|_1$. Here the
relation
\[
-\frac{\lambda}{cn} (\| \delta_T\|_{1} + \| \delta_{T^c}\|_{1} ) +
\|
\delta\|_{2,n}^2 - 2 c_s \| \delta\|_{2,n} \leq\frac{\lambda}{n} (
\|
\delta_T\|_{1} - \|\delta_{T^c}\|_{1}),
\]
which is established in (2.3) in the supplemental article~\cite{BC-PostL1-SA},
implies that $\|\delta\|_{2,n} \leq2c_s$ and also
\[
\|\delta_{T^c}\|_1 \leq\cc\|\delta_T\|_1 + \frac{c}{c-1} \frac
{n}{\lambda}\|\delta\|_{2,n}(2c_s - \|\delta\|_{2,n})
\leq\|\delta_T\|_1 + \frac{c}{c-1}\frac{n}{\lambda} c_s^2 \leq
\frac
{1}{2} \| \delta_{T^c}\|_1 + \frac{c}{c-1}\frac{n}{\lambda} c_s^2.
\]
Thus,
\[
\| \delta\|_1 \leq\biggl(1 + \frac{1}{2\cc}\biggr) \| \delta_{T^c}\|_1 \leq\biggl(1 +
\frac{1}{2\cc}\biggr)\frac{2c}{c-1}\frac{n}{\lambda} c_s^2.
\]
The result follows by taking the maximum of the bounds on each case and
invoking the bound on $\|\delta\|_{2,n}$.
\end{pf*}
\begin{pf*}{Proof of Theorem~\ref{Lemma:Crack}}
Part (1) follows immediately from the assumptions. To show part (2),
let $\delta:=\widehat\beta-\beta_0$, and proceed in two steps:

Step 1. By the first-order optimality conditions of $\widehat\beta$
and the assumption on $\lambda$,
\begin{eqnarray*}
\|\En[x_\ii x_\ii'\delta]\|_{\infty} & \leq&\|\En
[x_\ii(y_\ii-x_\ii'\widehat\beta)]\|_{\infty} + \|S/2\|_{\infty}
+ \|\En
[x_\ii r_\ii]\|_{\infty} \\
& \leq&\frac{\lambda}{2n} +
\frac{\lambda}{2cn} + \min\biggl\{\frac{\sigma}{\sqrt{n}}, c_s \biggr\},
\end{eqnarray*}
because $\|\En[x_\ii r_\ii]\|_{\infty} \leq\min\{\frac{\sigma
}{\sqrt{n}}, c_s \}$ by step 2 below.

Next, let $e_j$ denote the $j$th canonical direction. Thus, for every
$j=1,\ldots,p$, we have
\begin{eqnarray*}
| \En[e_j'x_\ii x_\ii'\delta] - \delta_j |
&=&|\En[e_j'(x_\ii x_\ii'-I)\delta]| \\
& \leq&\max_{1\leq j,k\leq
p}|(\En
[x_\ii x_\ii'-I])_{jk}| \|\delta\|_{1}\\
& \leq&\|\delta\|_1/[Us].
\end{eqnarray*}
Then, combining the two bounds above and using the triangle inequality,
we have
\[
\|\delta\|_\infty\leq\|\En[x_\ii x_\ii'\delta]\|_{\infty} + \|
\En
[x_\ii x_\ii'\delta]-\delta\|_{\infty} \leq\biggl(1+\frac{1}{c}\biggr)\frac
{\lambda
}{2n} + \min\biggl\{\frac{\sigma}{\sqrt{n}}, c_s \biggr\} +
\frac{\|\delta\|_1}{Us}.
\]
The result follows by Theorem~\ref{Thm:Nonparametric} to bound $\|
\delta
\|_1$ and the
arguments of~\cite{BickelRitovTsybakov2009} and~\cite{Lounici2008} to
show that the bound on the correlations imply that for any $C>0$,
\[
\kappa(C) \geq\sqrt{ 1 - s(1+2C)\|\En[x_\ii x_\ii'-I]\|_\infty},
\]
so that
$\kappa(\cc) \geq\sqrt{1-[(1+2\cc)/U]}$ and $\kappa(2\cc) \geq
\sqrt
{1-[(1+4\cc)/U]}$ under this particular design.

Step 2. In this step, we show that $ \| \En[ x_\ii r_\ii]\|_\infty
\leq\min\{\frac{\sigma}{\sqrt{n}}, c_s \}.$
First, note that for every $j=1,\ldots, p$, we have $|\En[x_{\ii
j}r_\ii
]|\leq\sqrt{\En[x_{\ii j}^2]\En[r_\ii^2]}=c_s$.
Next, by the definition of $\beta_0$ in (\ref{oracle}), for $j \in T$,
we have
$\En[x_{\ii j}(f_\ii-x_\ii'\beta_0)] = \En[x_{\ii j}r_\ii] = 0 $,
because $\beta_0$ is a minimizer over the support of $\beta_0$. For $j
\in T^c$, we have that for any $t \in\RR$,
\[
\En[(f_\ii- x_\ii'\beta_0)^2] + \sigma^2 \frac{s}{n} \leq\En
[(f_\ii
- x_\ii'\beta_0-tx_{\ii j})^2] + \sigma^2 \frac{s+1}{n}.
\]
Therefore, for any $t\in\RR$, we have
\[
-\sigma^2/n \leq\En[(f_\ii- x_\ii'\beta_0-tx_{\ii j})^2] - \En
[(f_\ii- x_\ii'\beta_0)^2] = -2t\En[x_{\ii j}(f_\ii- x_\ii'\beta
_0)]+t^2\En[x_{\ii j}^2].
\]
Taking the minimum over $t$ on the right-hand side at $t^* = \En
[x_{\ii j}(f_\ii- x_\ii'\beta_0)]$, we obtain
$ -\sigma^2/n \leq- (\En[x_{\ii j}(f_\ii- x_\ii'\beta_0)])^2$ or,
equivalently, $|\En[x_{\ii j}(f_\ii- x_\ii'\beta_0)]|\leq\sigma
/\sqrt{n}$.
\end{pf*}
\begin{pf*}{Proof of Lemma~\ref{Lemma:Sparsity{Lasso}}} Let
$\widehat T = \operatorname{support}(\hat\beta)$ and $\hat m =
|\widehat T\setminus T|$.
From the optimality conditions, we have that $|2\En[ x_{\ii j}(y_\ii
-x_\ii'\hat\beta)]| = \lambda/n \mbox{ for all } j \in\widehat T.
$ Therefore, for $R=(r_1,\ldots,r_n)'$, we have
\begin{eqnarray*}
\sqrt{|\widehat T|}\lambda& \leq& 2\bigl\| \bigl(X'(Y - X\hat\beta
)\bigr)_{\widehat T}\bigr \| \\[-2pt]
& \leq& 2\bigl\| \bigl(X'(Y - R - X \beta_0)\bigr)_{\widehat T} \bigr\| + 2\bigl\| \bigl(X'(R +
X\beta_0- X\hat\beta)\bigr)_{\widehat T} \bigr\| \\[-2pt]
& \leq& \sqrt{|\widehat T|}\cdot n\|S\|_{\infty} + 2n \sqrt{ \phi
(\hat m )} \bigl(\En[ (x_\ii'\hat\beta- f_\ii)^2]\bigr)^{1/2},
\end{eqnarray*}
using the definition of $\phi(\hat m)$ and the Holder inequality,
\begin{eqnarray*}
\bigl\| \bigl(X'(R + X\beta_0- X\hat\beta)\bigr)_{\widehat T}\bigr
 \|&
\leq&\sup_{\|\alpha_{T^c}\|_0\leq\hat m, \|\alpha\|\leq1}|
\alpha' X'(R + X\beta_0- X\hat\beta)| \\[-2pt]
&\leq&
\sup_{\|\alpha_{T^c}\|_0\leq\hat m, \|\alpha\|\leq1}\| \alpha'X'\|
\|R
+ X\beta_0- X\hat\beta\| \\[-2pt]
&= & \sup_{\|\alpha_{T^c}\|_0\leq\hat
m, \|\alpha\|\leq1}\sqrt{| \alpha'X'X\alpha|}\|R + X\beta_0-
X\hat
\beta\|\\[-2pt]
& = &
n\sqrt{\phi(\hat m)}\bigl(\En[ (x_\ii'\hat\beta- f_\ii)^2]\bigr)^{1/2}.
\end{eqnarray*}
Because $\lambda/c \geq n\|S\|_\infty$, we have
%
%
\begin{equation}\label{Eq:Lower{Lasso}} (1-1/c)\sqrt{|\widehat
T|}\lambda\leq2n \sqrt{ \phi(\hat m )} \bigl(\En[ (x_\ii'\hat\beta-
f_\ii
)^2]\bigr)^{1/2}.
\end{equation}
Moreover, because $\widehat m \leq|\widehat T|$, and by Theorem \ref
{Thm:Nonparametric} and Remark~\ref{remark:NormsAndLowerBound}, $(\En[
(x_\ii'\hat\beta- f_\ii)^2])^{1/2} \leq\|\widehat\beta-\beta_0\|
_{2,n}+c_s \leq(1 + \frac{1}{c}) \frac{\lambda\sqrt{s}}{n \kappa
(\cc
)} + 3c_s$, we have
\[
(1-1/c)\sqrt{\hat m} \leq2\sqrt{\phi(\hat m)}(1+1/c)\sqrt
{s}/\kappa(\cc
) + 6 \sqrt{\phi(\hat m)} nc_s/\lambda.
\]

The result follows by noting that $(1-1/c) = 2/(\cc+1)$ by definition
of $\cc$.
\end{pf*}
\begin{pf*}{Proof of Theorem~\ref{Thm:Sparsity}}
By Lemma~\ref{Lemma:Sparsity{Lasso}}, $ \sqrt{\hat m} \leq\sqrt{ \phi(\hat m)} \cdot2\cc\sqrt
{s}/\kappa(\cc
) + 3(\cc+1) \sqrt{\phi(\hat m)} \cdot nc_s/\lambda,$
which, by letting $L_n = ( \frac{2\cc}{\kappa(\cc)} + 3(\cc
+1)\frac
{nc_s}{\lambda\sqrt{s}} )^2$, can be rewritten as
%
%
\begin{equation}\label{Eq:Sparsity}\hat m \leq s \cdot\phi(\hat m) L_n.
\end{equation}
Note that $\widehat m \leq n$ by optimality conditions. Consider any $M
\in\mathcal{M}$, and suppose that $\widehat m > M$. Therefore, by
Lemma~\ref{Lemma:SparseEigenvalueIMP} on the sublinearity of restricted
sparse eigenvalues,
\[
\hat m \leq s \cdot\biggl\lceil\frac{\hat m}{M} \biggr\rceil\phi(M) L_n.
\]
Thus, because $\lceil k \rceil< 2k$ for any $k\geq1$, we have
$ M < s \cdot2\phi(M) L_n$, which violates the condition of $M \in
\mathcal{M}$. Therefore, we must have $\widehat m \leq M$.
In turn, applying (\ref{Eq:Sparsity}) once more with $\widehat m \leq
(M\wedge n)$, we obtain
$ \hat m \leq s \cdot\phi(M\wedge n) L_n.$
The result follows by minimizing the bound over $M \in\mathcal{M}$.
\end{pf*}
%
\subsection{\texorpdfstring{Proofs for Section \protect\ref{Sec:PostModel}}{Proofs for Section 4}}
\begin{pf*}{Proof of Theorem~\ref{Thm:2StepMain}}
Let $\widetilde\delta:= \widetilde\beta- \beta_0$. By the
definition of the second-step estimator, it follows that $\widehat
Q(\widetilde\beta) \leq\widehat Q(\widehat\beta)$ and $\widehat
Q(\widetilde\beta) \leq\widehat Q(\beta_{0\widehat T})$. Thus,
\[
\widehat Q (\widetilde\beta) - \widehat Q(\beta_0) \leq\bigl( \widehat Q
(\widehat\beta) - \widehat Q(\beta_0) \bigr) \wedge\bigl( \widehat Q(\beta
_{0\widehat T}) - \widehat Q(\beta_0) \bigr) \leq B_n \wedge C_n.
\]
By Lemma~\ref{sparse} part (1), for any $\varepsilon>0$ there exists a
constant $K_{\varepsilon}$ such that with probability at least
$1-\varepsilon$,
$| \widehat Q(\widetilde\beta) - \widehat Q(\beta_0) - \|\widetilde
\delta\|^2_{2,n} | \leq A_{\varepsilon,n}\|\widetilde\delta\|_{2,n} +
2c_s\|\widetilde\delta\|_{2,n}$,
where
\[
A_{\varepsilon,n} := K_{\varepsilon} \sigma
\sqrt{\bigl(\widehat m \log p + ( \widehat m + s )
\log(\mathrm{e}{\mu(\widehat m)})\bigr)/n}.
\]
Combining these relations, we obtain the inequality
$
\|\widetilde\delta\|_{2,n}^2 - A_{\varepsilon,n} \|\widetilde\delta
\|
_{2,n} - 2c_s\|\widetilde\delta\|_{2,n}\leq B_n \wedge C_n.
$
Solving this, we obtain the stated inequality,
$ \|\widetilde\delta\|_{2,n} \!\leq\! A_{\varepsilon,n}\! + \!2c_s\!+\! \sqrt
{(B_n)_+ \wedge(C_n)_+}.$
Finally, the bound on $B_n$ follows from Lemma~\ref{sparse} part (1).
The bound on $C_n$ follows from Lemma~\ref{sparse} part~(2).
\end{pf*}
\begin{pf*}{Proof of Lemma~\ref{sparse}} The proof of part (1)
follows from the relation
\[
\bigl| \widehat Q(\beta_0 + \delta) - \widehat Q(\beta_0) - \| \delta
\|
^2_{2,n} \bigr| = | 2 \En[ \epsilon_\ii x_\ii' \delta] + 2 \En[r_\ii
x_\ii'
\delta]|,
\]
and then bounding $|2 \En[r_\ii x_\ii'\delta]|$ by $2 c_s \|
\delta\|
_{2,n}$ using the Cauchy--Schwarz inequality, applying Lemma \ref
{master lemma} on sparse control of noise to $|2 \En[ \epsilon_\ii
x_\ii' \delta]|$, where we bound ${p\choose m}$ by $p^m$ and set
$K_\varepsilon= 6\sqrt{2}\log^{1/2} \max\{e, D, 1/(\mathrm{e}^s\varepsilon
[1-1/\mathrm{e}])\}$. The proof part (2) also follows from Lemma~\ref{master
lemma}, but applying it with $s=0$, $p=s$ (because only the components
in $T$ are modified), $m = k$, and noting that we can take ${\mu(m)}$
with $m=0$.
\end{pf*}
\begin{pf*}{Proof of Lemma~\ref{master lemma}} We divide the proof
into steps.

Step 0. Note that we can restrict the supremum over $\|\delta\|=1$
because the function is homogenous of degree 0.

Step 1. For each nonnegative integer $ m \leq n$ and each set
$\widetilde T \subset\{1,\ldots,p\}$, with $|\widetilde T\setminus
T|\leq m$, define the class of functions
%
%
\begin{equation}\label{Def:FF}
\mathcal{G}_{\widetilde T} = \{ \epsilon_i x_i'\delta/ \|\delta\|
_{2,n} \dvtx \operatorname{support}(\delta) \subseteq\widetilde T, \|\delta\|
=1\}.
\end{equation}
Also define $\mathcal{F}_{m} = \{ \mathcal{G}_{\widetilde T} \dvtx
\widetilde T \subset\{1,\ldots,p\}\dvtx | \widetilde T\setminus T|\leq m\}.
$
It follows that
%
%
\begin{equation}\label{Eq:FirstIneq}
P\Bigl( \sup_{f\in\mathcal{F}_m}|\Gn(f)| \geq e_n(m,\eta) \Bigr) \leq\pmatrix{p\cr m} \max_{|\widetilde T\setminus T|\leq m} P\Bigl( \sup_{f\in\mathcal
{G}_{\widetilde T}}|\Gn(f)| \geq e_n(m,\eta) \Bigr).
\end{equation}

We apply the Samorodnitsky--Talagrand inequality (Proposition A.2.7 of van
der Vaart and Wellner~\cite{vdV-W}) to bound the right-hand side
of (\ref{Eq:FirstIneq}). Let
\[
\rho(f,g) := \sqrt{ \mathrm{E}[\Gn(f) - \Gn(g)]^2}= \sqrt{\mathrm{E}\En[(f-g)^2]}
\]
for $f, g \in\mathcal{G}_{\widetilde T}$. By step 2 below, the
covering number of $\mathcal{G}_{\widetilde T}$ with respect to $\rho$
obeys
%
%
\begin{equation}\label{Bound Covering Number}
N(\varepsilon,\mathcal
{G}_{\widetilde T},\rho) \leq\bigl(6 \sigma
{\mu(m)}/\varepsilon\bigr)^{m+s}\qquad \mbox{for each } 0 < \varepsilon\leq
\sigma,
\end{equation}
and
$\sigma^2(\mathcal{G}_{\widetilde T}) := \max_{f \in\mathcal
{G}_{\widetilde T}} \mathrm{E}[\Gn(f)]^2= \sigma^2$. Then, by the
Samorodnitsky--Talagrand inequality,
%
%
\begin{equation}\label{TalagrandIneq}
P\Bigl( \sup_{f\in\mathcal
{G}_{\widetilde T}}|\Gn(f)| \geq e_n(m,\eta)\Bigr ) \leq\biggl( \frac{D
\sigma{\mu(m)} e_n(m,\eta)}{\sqrt{m+s}\sigma^2} \biggr)^{m+s} \bar{\Phi
}\bigl(e_n(m,\eta)/\sigma\bigr)
\end{equation}
for some universal constant $D\geq1$, where $\bar{\Phi}=1-\Phi$ and
$\Phi$ is the cumulative probability distribution function for a
standardized Gaussian random variable.
For $e_n(m,\eta)$ defined in the statement of the theorem, it follows that
$ P( \sup_{f\in\mathcal{G}_{\widetilde T}}|\Gn(f)| \geq e_n(m,\eta
) )
\leq\eta e^{-m-s}/{p\choose m}$ by simple substitution into (\ref
{TalagrandIneq}).
Then,
\begin{eqnarray*}
P \Bigl( \sup_{f \in\mathcal{F}_m} |
\mathbb{G}_n(f)| > e_n(m,\eta), \exists m \leq n \Bigr) & \leq& \sum
_{m=0}^n P \Bigl( \sup_{f \in\mathcal{F}_m} | \mathbb{G}_n(f)| >
e_n(m,\eta) \Bigr) \\
& \leq& \sum_{m=0}^n \eta \mathrm{e}^{-m-s} \leq\eta \mathrm{e}^{-s}/(1-1/\mathrm{e}),
\end{eqnarray*}
which proves the claim.

Step 2. This step establishes (\ref{Bound Covering Number}). For $t
\in
\Bbb{R}^p$ and $\widetilde t \in\Bbb{R}^p$,
consider any two functions
\[
\epsilon_i \frac{(x_i't)}{ \|t\|_{2,n}} \mbox{ and } \epsilon_i
\frac
{(x_i'\widetilde t)}{ \|\widetilde t\|_{2,n}} \mbox{ in } \mathcal
{G}_{\widetilde T}, \mbox{ for a given } \widetilde T \subset\{
1,\ldots
,p\}\dvtx |\widetilde T\setminus T|\leq m.
\]
We have that
\[
\sqrt{ \mathrm{E}\En\biggl[ \epsilon_\ii^2 \biggl( \frac{(x_\ii't)}{ \|t\|_{2,n}} -
\frac
{(x_\ii'\widetilde t)}{\|\widetilde t\|_{2,n}} \biggr)^2\biggr]} \leq\sqrt{ \mathrm{E}\En\biggl[
\epsilon_\ii^2 \frac{( x_\ii'(t-\widetilde t))^2}{ \|t\|^2_{2,n}}\biggr]} +
\sqrt{ \mathrm{E}\En\biggl[ \epsilon_\ii^2 \biggl( \frac{(x_\ii'\widetilde t)}{\|t\|_{2,n}}
-\frac{(x_\ii'\widetilde t)}{\|\widetilde t\|_{2,n}}\biggr)^2\biggr]}.
\]

By definition of $\mathcal{G}_{\widetilde T}$ in (\ref{Def:FF}),
$\operatorname{support}
(t) \subseteq\widetilde T$ and $\operatorname{support}(\widetilde t)
\subseteq
\widetilde T$, so that $\operatorname{support}(t-\widetilde t)
\subseteq\widetilde T$,
$|\widetilde T\setminus T|\leq m$, and $\|t\| = 1$ by (\ref{Def:FF}).
Thus, by the definition of \hyperref[conrsem]{$\mathit{RSE}(m)$},
\begin{eqnarray*}
\mathrm{E}\En\biggl[ \epsilon_\ii^2 \frac{( x_\ii'(t-\widetilde t))^2}{ \|t\|
^2_{2,n}}\biggr] &\leq&\sigma^2 \phi(m) \|t-\widetilde t\|^2/ \widetilde
\kappa(m)^2, \quad\mbox{and}\\
\mathrm{E}\En\biggl[ \epsilon_\ii^2 \biggl( \frac{(x_\ii'\widetilde t)}{\|t\|_{2,n}}
-\frac{(x_\ii'\widetilde t)}{\|\widetilde t\|_{2,n}}\biggr)^2\biggr]
&=& \mathrm{E}\En\biggl[
\epsilon_\ii^2 \frac{(x_\ii'\widetilde t)^2}{\|\widetilde t\|^2_{2,n}}
\biggl(\frac{\|\widetilde t\|_{2,n}- \|t\|_{2,n} }{\|t\|_{2,n}}\biggr )^2\biggr] \\
&=&  \sigma^2\biggl(\frac{\|\widetilde t\|_{2,n}- \|t\|_{2,n} }{\|t\|_{2,n}}
\biggr)^2\\
&\leq&\sigma^2 \|\widetilde t-t\|_{2,n}^2/\|t\|^2_{2,n}
\leq\sigma
^2 \phi(m)\|\widetilde t-t\|^2/\widetilde\kappa(m)^2,
\end{eqnarray*}
so that
\[
\sqrt{ \mathrm{E}\En\biggl[ \epsilon_\ii^2 \biggl( \frac{(x_\ii't)}{ \|t\|_{2,n}} -
\frac
{(x_\ii'\widetilde t)}{\|\widetilde t\|_{2,n}}\biggr )^2\biggr]} \leq2\sigma\|
t-\widetilde t\|\sqrt{\phi(m)}/\widetilde\kappa(m) = 2 \sigma{\mu(m)}
\|t-\widetilde t\|.
\]
Then the bound (\ref{Bound Covering Number}) follows from the bound of
\cite{vdV-W}, page 94, $N(\varepsilon,\mathcal{G}_{\widetilde T},\rho
) \leq N(\varepsilon/R,\break B(0,1),\|\cdot\|)\leq(3R/\varepsilon
)^{m+s} $, with $R = 2\sigma{\mu(m)}$ for any $\varepsilon\leq
\sigma$.
\end{pf*}
%
%
\subsection{\texorpdfstring{Proofs for Section \protect\ref{sec5}}{Proofs for Section 5}}
\begin{pf*}{Proof of Theorem~\ref{Cor:2StepNonparametric}} First,
note that if $T \subseteq\widehat T$, we then have $C_n = 0$, so that
$B_n\wedge C_n \leq1\{T\not\subseteq\widehat T\} B_n$.

Next, we bound $B_n$. Note that by the optimality of $\hat
\beta$ in the {Lasso} problem, and letting $\widehat\delta= \widehat
\beta- \beta_0$,
%
%
\begin{equation}\label{endarray}
B_n := \widehat Q(\hat\beta) - \widehat Q(\beta_0 ) \leq\frac
{\lambda}{n}( \| \beta_0\|_{1} - \|\hat\beta\|_{1}) \leq\frac
{\lambda
}{n}( \| \widehat\delta_T \|_{1} - \|\widehat\delta_{T^c}\|_{1}).
\end{equation}
If $\|\widehat\delta_{T^c}\|_{1} > \|\widehat\delta_{T}\|_{1}$, then
we have
$ \hat Q(\hat\beta) - \hat Q(\beta_0 ) \leq0$.
Otherwise, if $\|\widehat\delta_{T^c}\|_{1} \leq\|\widehat\delta
_{T}\|_{1}$, then, by \hyperref[conrec]{$\mathit{RE}(1)$}, we have
%
%
\begin{equation}\label{endarray2}
B_n := \widehat Q(\hat\beta) - \widehat Q(\beta_0 ) \leq
\frac{\lambda}{n} \|\widehat\delta_{T}\|_{1} \leq\frac{\lambda}{n}
\frac{\sqrt{s}\|\widehat\delta\|_{2,n}}{\kappa(1)}.
\end{equation}
The result follows by applying Theorem~\ref{Thm:Nonparametric} to bound
$\|\widehat\delta\|_{2,n}$, under the condition that \hyperref[conrec]{$\mathit{RE}(1)$} holds,
along with Theorem~\ref{Thm:2StepMain}.

The second claim follows from the first by using $\lambda\lesssim
\sqrt
{n\log p}$ under Condition \textup{\hyperref[conv]{V}}, the specified conditions on the penalty
level. The final bound follows by applying the relation that
for any nonnegative numbers $a, b$, we have $\sqrt{ab}\leq(a+b)/2$.
\end{pf*}
\end{appendix}
\section*{Acknowledgements}
We thank Don Andrews, Whitney Newey, and Alexandre Tsybakov as well as
participants of the Cowles Foundation Lecture at the 2009 Summer
Econometric Society meeting and the joint Harvard--MIT seminar for
useful comments. We also thank Denis Chetverikov, Brigham Fradsen,
Joonhwan Lee, two refereers, and the associate editor for numerous
suggestions that helped improve the articler. We thank Kengo Kato for
pointing out the usefulness of the approach of \cite
{RudelsonVershynin2008} for bounding sparse eigenvalues of the
empirical Gram matrix. We gratefully acknowledge the financial support
from the National Science Foundation.
%

\begin{supplement}
\stitle{Supplementary material for Least squares after
model selection in high-dimensional sparse models}
\slink[doi]{10.3150/11-BEJ410SUPP}  
\sdatatype{.pdf}
\sfilename{BEJ410\_supp.pdf}
\sdescription{The online supplemental article~\cite{BC-PostL1-SA} contains a finite
sample results for the estimation of $\sigma$, details regarding the
oracle problem, omitted proofs, uniform control of sparse eigenvalues,
and Monte Carlo experiments to access the performance of the estimators
proposed in the paper.\looseness=-1}
\end{supplement}

%
%

\printhistory


\begin{thebibliography}{36}


\bibitem{BC-SparseQR-SA}
\begin{bmisc}[auto:STB|2012/02/21|08:54:59]
\bauthor{\bsnm{Belloni},~\bfnm{A.}\binits{A.}} \AND
  \bauthor{\bsnm{Chernozhukov},~\bfnm{V.}\binits{V.}}
(\byear{2011}).
\bhowpublished{Supplement to ``$\ell _1$-penalized quantile regression in
  high-dimensional sparse models.'' DOI:\doiurl{10.1214/10-AOS827SUPP}}.
\bptok{imsref}%
\end{bmisc}
\endbibitem

\bibitem{BC-PostL1-SA}
\begin{bmisc}[auto:STB|2012/02/21|08:54:59]
\bauthor{\bsnm{Belloni},~\bfnm{A.}\binits{A.}} \AND
  \bauthor{\bsnm{Chernozhukov},~\bfnm{V.}\binits{V.}}
(\byear{2012}).
\bhowpublished{Supplement to ``Least squares after model selection in
  high-dimensional sparse models.'' DOI:\doiurl{10.3150/11-BEJ410SUPP}}.
\bptok{imsref}%
\end{bmisc}
\endbibitem

\bibitem{BC-SparseQR}
\begin{barticle}[mr]
\bauthor{\bsnm{Belloni},~\bfnm{Alexandre}\binits{A.}} \AND
  \bauthor{\bsnm{Chernozhukov},~\bfnm{Victor}\binits{V.}}
(\byear{2011}).
\btitle{{$\ell\sb 1$}-penalized quantile regression in high-dimensional sparse
  models}.
\bjournal{Ann. Statist.}
\bvolume{39}
\bpages{82--130}.
\bid{doi={10.1214/10-AOS827}, issn={0090-5364}, mr={2797841}}
\bptok{imsref}%
\end{barticle}
\endbibitem

\bibitem{BickelRitovTsybakov2009}
\begin{barticle}[mr]
\bauthor{\bsnm{Bickel},~\bfnm{Peter~J.}\binits{P.J.}},
  \bauthor{\bsnm{Ritov},~\bfnm{Ya'acov}\binits{Y.}} \AND
  \bauthor{\bsnm{Tsybakov},~\bfnm{Alexandre~B.}\binits{A.B.}}
(\byear{2009}).
\btitle{Simultaneous analysis of lasso and {D}antzig selector}.
\bjournal{Ann. Statist.}
\bvolume{37}
\bpages{1705--1732}.
\bid{doi={10.1214/08-AOS620}, issn={0090-5364}, mr={2533469}}
\bptok{imsref}%
\end{barticle}
\endbibitem

\bibitem{Bunea2008}
\begin{bincollection}[auto:STB|2012/02/21|08:54:59]
\bauthor{\bsnm{Bunea},~\bfnm{F.}\binits{F.}}
(\byear{2008}).
\btitle{Consistent selection via the Lasso for high-dimensional approximating
  models}.
In \bbooktitle{IMS Lecture Notes Monograph Series}
\bvolume{123}
\bpages{123--137}.
\bptok{imsref}%
\end{bincollection}
\endbibitem

\bibitem{BuneaTsybakovWegkamp2006}
\begin{bincollection}[mr]
\bauthor{\bsnm{Bunea},~\bfnm{Florentina}\binits{F.}},
  \bauthor{\bsnm{Tsybakov},~\bfnm{Alexandre~B.}\binits{A.B.}} \AND
  \bauthor{\bsnm{Wegkamp},~\bfnm{Marten~H.}\binits{M.H.}}
(\byear{2006}).
\btitle{Aggregation and sparsity via {$l\sb 1$} penalized least squares}.
In \bbooktitle{Learning Theory}.
\bseries{Lecture Notes in Computer Science}
\bvolume{4005}
\bpages{379--391}.
\baddress{Berlin}: \bpublisher{Springer}.
\bid{doi={10.1007/11776420_29}, mr={2280619}}
\bptok{imsref}%
\end{bincollection}
\endbibitem

\bibitem{BuneaTsybakovWegkamp2007b}
\begin{barticle}[mr]
\bauthor{\bsnm{Bunea},~\bfnm{Florentina}\binits{F.}},
  \bauthor{\bsnm{Tsybakov},~\bfnm{Alexandre}\binits{A.B.}} \AND
  \bauthor{\bsnm{Wegkamp},~\bfnm{Marten}\binits{M.}}
(\byear{2007}).
\btitle{Sparsity oracle inequalities for the {L}asso}.
\bjournal{Electron. J. Stat.}
\bvolume{1}
\bpages{169--194}.
\bid{doi={10.1214/07-EJS008}, issn={1935-7524}, mr={2312149}}
\bptok{imsref}%
\end{barticle}
\endbibitem

\bibitem{BuneaTsybakovWegkamp2007}
\begin{barticle}[mr]
\bauthor{\bsnm{Bunea},~\bfnm{Florentina}\binits{F.}},
  \bauthor{\bsnm{Tsybakov},~\bfnm{Alexandre~B.}\binits{A.B.}} \AND
  \bauthor{\bsnm{Wegkamp},~\bfnm{Marten~H.}\binits{M.H.}}
(\byear{2007}).
\btitle{Aggregation for {G}aussian regression}.
\bjournal{Ann. Statist.}
\bvolume{35}
\bpages{1674--1697}.
\bid{doi={10.1214/009053606000001587}, issn={0090-5364}, mr={2351101}}
\bptok{imsref}%
\end{barticle}
\endbibitem

\bibitem{CandesTao2007}
\begin{barticle}[mr]
\bauthor{\bsnm{Candes},~\bfnm{Emmanuel}\binits{E.}} \AND
  \bauthor{\bsnm{Tao},~\bfnm{Terence}\binits{T.}}
(\byear{2007}).
\btitle{The {D}antzig selector: Statistical estimation when {$p$} is much
  larger than~{$n$}}.
\bjournal{Ann. Statist.}
\bvolume{35}
\bpages{2313--2351}.
\bid{doi={10.1214/009053606000001523}, issn={0090-5364}, mr={2382644}}
\bptok{imsref}%
\end{barticle}
\endbibitem



%

\bibitem{Efromovich1999}
\begin{bbook}[mr]
\bauthor{\bsnm{Efromovich},~\bfnm{Sam}\binits{S.}}
(\byear{1999}).
\btitle{Nonparametric Curve Estimation: Methods, Theory, and Applications}.
\bseries{Springer Series in Statistics}.
\baddress{New York}: \bpublisher{Springer}.
\bid{mr={1705298}}
\bptok{imsref}%
\end{bbook}
\endbibitem

\bibitem{FanLv2006}
\begin{barticle}[mr]
\bauthor{\bsnm{Fan},~\bfnm{Jianqing}\binits{J.}} \AND
  \bauthor{\bsnm{Lv},~\bfnm{Jinchi}\binits{J.}}
(\byear{2008}).
\btitle{Sure independence screening for ultrahigh dimensional feature space}.
\bjournal{J. R. Stat. Soc. Ser. B Stat. Methodol.}
\bvolume{70}
\bpages{849--911}.
\bid{doi={10.1111/j.1467-9868.2008.00674.x}, issn={1369-7412}, mr={2530322}}
\bptok{imsref}%
\end{barticle}
\endbibitem


\bibitem{Koltchinskii2009}
\begin{barticle}[mr]
\bauthor{\bsnm{Koltchinskii},~\bfnm{Vladimir}\binits{V.}}
(\byear{2009}).
\btitle{Sparsity in penalized empirical risk minimization}.
\bjournal{Ann. Inst. Henri Poincar\'e Probab. Stat.}
\bvolume{45}
\bpages{7--57}.
\bid{doi={10.1214/07-AIHP146}, issn={0246-0203}, mr={2500227}}
\bptok{imsref}%
\end{barticle}
\endbibitem


\bibitem{Lounici2008}
\begin{barticle}[mr]
\bauthor{\bsnm{Lounici},~\bfnm{Karim}\binits{K.}}
(\byear{2008}).
\btitle{Sup-norm convergence rate and sign concentration property of {L}asso
  and {D}antzig estimators}.
\bjournal{Electron. J. Stat.}
\bvolume{2}
\bpages{90--102}.
\bid{doi={10.1214/08-EJS177}, issn={1935-7524}, mr={2386087}}
\bptok{imsref}%
\end{barticle}
\endbibitem

\bibitem{LouniciPontilTsybakovvandeGeer2009}
\begin{bmisc}[auto:STB|2012/02/21|08:54:59]
\bauthor{\bsnm{Lounici},~\bfnm{K.}\binits{K.}},
  \bauthor{\bsnm{Pontil},~\bfnm{M.}\binits{M.}},
  \bauthor{\bsnm{Tsybakov},~\bfnm{A.~B.}\binits{A.B.}} \AND
  \bauthor{\bparticle{van~de} \bsnm{Geer},~\bfnm{S.}\binits{S.}}
(\byear{2009}).
\bhowpublished{Taking advantage of sparsity in multi-task learning.
In \textit{Proceedings of the} 22\textit{nd Annual Conference on Learning
Theory}
(\textit{COLT} 2009) 73--82. Omnipress}.
\bptok{imsref}%
\end{bmisc}
\endbibitem

\bibitem{LouniciPontilTsybakovvandeGeer2010}
\begin{bmisc}[auto:STB|2012/02/21|08:54:59]
\bauthor{\bsnm{Lounici},~\bfnm{K.}\binits{K.}},
  \bauthor{\bsnm{Pontil},~\bfnm{M.}\binits{M.}},
  \bauthor{\bsnm{Tsybakov},~\bfnm{A.~B.}\binits{A.B.}} \AND
  \bauthor{\bparticle{van~de} \bsnm{Geer},~\bfnm{S.}\binits{S.}}
(\byear{2012}).
\bhowpublished{Oracle inequalities and optimal inference under group
sparsity. \textit{Ann. Statist.} To appear}.
\bptok{imsref}%
\end{bmisc}
\endbibitem

\bibitem{MY2007}
\begin{barticle}[mr]
\bauthor{\bsnm{Meinshausen},~\bfnm{Nicolai}\binits{N.}} \AND
  \bauthor{\bsnm{Yu},~\bfnm{Bin}\binits{B.}}
(\byear{2009}).
\btitle{Lasso-type recovery of sparse representations for high-dimensional
  data}.
\bjournal{Ann. Statist.}
\bvolume{37}
\bpages{246--270}.
\bid{doi={10.1214/07-AOS582}, issn={0090-5364}, mr={2488351}}
\bptok{imsref}%
\end{barticle}
\endbibitem


\bibitem{RosenbaumTsybakov2008}
\begin{barticle}[mr]
\bauthor{\bsnm{Rosenbaum},~\bfnm{Mathieu}\binits{M.}} \AND
  \bauthor{\bsnm{Tsybakov},~\bfnm{Alexandre~B.}\binits{A.B.}}
(\byear{2010}).
\btitle{Sparse recovery under matrix uncertainty}.
\bjournal{Ann. Statist.}
\bvolume{38}
\bpages{2620--2651}.
\bid{doi={10.1214/10-AOS793}, issn={0090-5364}, mr={2722451}}
\bptok{imsref}%
\end{barticle}
\endbibitem

\bibitem{RudelsonVershynin2008}
\begin{barticle}[mr]
\bauthor{\bsnm{Rudelson},~\bfnm{Mark}\binits{M.}} \AND
  \bauthor{\bsnm{Vershynin},~\bfnm{Roman}\binits{R.}}
(\byear{2008}).
\btitle{On sparse reconstruction from {F}ourier and {G}aussian measurements}.
\bjournal{Comm. Pure Appl. Math.}
\bvolume{61}
\bpages{1025--1045}.
\bid{doi={10.1002/cpa.20227}, issn={0010-3640}, mr={2417886}}
\bptok{imsref}%
\end{barticle}
\endbibitem

\bibitem{T1996}
\begin{barticle}[mr]
\bauthor{\bsnm{Tibshirani},~\bfnm{Robert}\binits{R.}}
(\byear{1996}).
\btitle{Regression shrinkage and selection via the lasso}.
\bjournal{J. Roy. Statist. Soc. Ser. B}
\bvolume{58}
\bpages{267--288}.
\bid{issn={0035-9246}, mr={1379242}}
\bptok{imsref}%
\end{barticle}
\endbibitem

\bibitem{TsybakovBook}
\begin{bbook}[mr]
\bauthor{\bsnm{Tsybakov},~\bfnm{Alexandre~B.}\binits{A.B.}}
(\byear{2008}).
\btitle{Introduction to Nonparametric Estimation}.
\baddress{Berlin}: \bpublisher{Springer}.
\bptnote{check year}
\bptok{imsref}%
\end{bbook}
\endbibitem

\bibitem{vanGeer2000}
\begin{bbook}[mr]
\bauthor{\bparticle{van~de} \bsnm{Geer},~\bfnm{Sara~A.}\binits{S.A.}}
(\byear{2000}).
\btitle{Empirical Processes in M-Estimation}.
\baddress{Cambridge}: \bpublisher{Cambridge Univ. Press}.
\bptok{imsref}%
\end{bbook}
\endbibitem

\bibitem{vdGeer}
\begin{barticle}[mr]
\bauthor{\bparticle{van~de} \bsnm{Geer},~\bfnm{Sara~A.}\binits{S.A.}}
(\byear{2008}).
\btitle{High-dimensional generalized linear models and the lasso}.
\bjournal{Ann. Statist.}
\bvolume{36}
\bpages{614--645}.
\bid{doi={10.1214/009053607000000929}, issn={0090-5364}, mr={2396809}}
\bptok{imsref}%
\end{barticle}
\endbibitem

\bibitem{vdV-W}
\begin{bbook}[mr]
\bauthor{\bparticle{van~der} \bsnm{Vaart},~\bfnm{Aad~W.}\binits{A.W.}} \AND
  \bauthor{\bsnm{Wellner},~\bfnm{Jon~A.}\binits{J.A.}}
(\byear{1996}).
\btitle{Weak Convergence and Empirical Processes}.
\bseries{Springer Series in Statistics}.
\baddress{New York}: \bpublisher{Springer}.
\bid{mr={1385671}}
\bptok{imsref}%
\end{bbook}
\endbibitem

\bibitem{Wainright2006}
\begin{barticle}[mr]
\bauthor{\bsnm{Wainwright},~\bfnm{Martin~J.}\binits{M.J.}}
(\byear{2009}).
\btitle{Sharp thresholds for high-dimensional and noisy sparsity recovery using
  {$\ell\sb 1$}-constrained quadratic programming ({L}asso)}.
\bjournal{IEEE Trans. Inform. Theory}
\bvolume{55}
\bpages{2183--2202}.
\bid{doi={10.1109/TIT.2009.2016018}, issn={0018-9448}, mr={2729873}}
\bptok{imsref}%
\end{barticle}
\endbibitem

\bibitem{Wasserman2005}
\begin{bbook}[mr]
\bauthor{\bsnm{Wasserman},~\bfnm{Larry}\binits{L.}}
(\byear{2006}).
\btitle{All of Nonparametric Statistics}.
\bseries{Springer Texts in Statistics}.
\baddress{New York}: \bpublisher{Springer}.
\bid{mr={2172729}}
\bptnote{check year}
\bptok{imsref}%
\end{bbook}
\endbibitem

\bibitem{ZhangHuang2006}
\begin{barticle}[mr]
\bauthor{\bsnm{Zhang},~\bfnm{Cun-Hui}\binits{C.H.}} \AND
  \bauthor{\bsnm{Huang},~\bfnm{Jian}\binits{J.}}
(\byear{2008}).
\btitle{The sparsity and bias of the {LASSO} selection in high-dimensional
  linear regression}.
\bjournal{Ann. Statist.}
\bvolume{36}
\bpages{1567--1594}.
\bid{doi={10.1214/07-AOS520}, issn={0090-5364}, mr={2435448}}
\bptok{imsref}%
\end{barticle}
\endbibitem

\bibitem{ZhaoYu2006}
\begin{barticle}[mr]
\bauthor{\bsnm{Zhao},~\bfnm{Peng}\binits{P.}} \AND
  \bauthor{\bsnm{Yu},~\bfnm{Bin}\binits{B.}}
(\byear{2006}).
\btitle{On model selection consistency of {L}asso}.
\bjournal{J. Mach. Learn. Res.}
\bvolume{7}
\bpages{2541--2563}.
\bid{issn={1532-4435}, mr={2274449}}
\bptok{imsref}%
\end{barticle}
\endbibitem

\end{thebibliography}
\end{document}